\documentclass[a4paper,12pt,reqno]{amsart}
\usepackage{amssymb}
\usepackage{amsmath}
\usepackage{enumitem}
\usepackage{mathrsfs}
\usepackage[all]{xy}
\usepackage{tikz-cd}
\usepackage{mathtools}
\usepackage{hyperref}
\usepackage{url}
\usepackage{bbm}

\usepackage[OT2,T1]{fontenc}
\DeclareSymbolFont{cyrletters}{OT2}{wncyr}{m}{n}
\usepackage[british]{babel}

\usepackage[margin=1.3in]{geometry}
\numberwithin{equation}{section} \numberwithin{figure}{section}

\DeclareMathOperator{\Gal}{Gal} 
 
\DeclareMathOperator{\Spec}{Spec}
 
\DeclareMathOperator{\Hom}{Hom} 
\DeclareMathOperator{\im}{Im}

\DeclareMathOperator{\Br}{Br}

\let\H\undefined
\DeclareMathOperator{\H}{H}

\DeclareMathOperator{\wgcd}{wgcd}

\DeclareSymbolFont{cyrletters}{OT2}{wncyr}{m}{n}
\DeclareMathSymbol{\Sha}{\mathalpha}{cyrletters}{"58}
\DeclareMathSymbol{\Be}{\mathalpha}{cyrletters}{"42}

\renewcommand{\O}{\mathcal{O}}
\newcommand{\GL}{\textrm{GL}}

\newcommand{\Sp}{\mathrm{Sp}}
\newcommand{\GSp}{\mathrm{GSp}}

\newcommand{\x}{\mathbf{x}}
\newcommand{\y}{\mathbf{y}}

\renewcommand\P{\mathbb{P}}
\newcommand\A{\mathbb{A}}
\newcommand\Z{\mathbb{Z}}
\newcommand\N{\mathbb{N}}
\newcommand\Q{\mathbb{Q}}
\newcommand\R{\mathbb{R}}
\newcommand\C{\mathbb{C}}
\newcommand\GG{\mathbb{G}}

\newcommand\Gm{\GG_\mathrm{m}}

\newcommand{\sX}{\mathcal{X}}
\newcommand{\sY}{\mathcal{Y}}

\renewcommand{\a}{\mathbf{a}}

\newcommand{\sumstar}{\sideset{}{^*}\sum}

\newtheorem{lemma}{Lemma}

\newtheorem{theorem}[lemma]{Theorem}

\newtheorem{corollary}[lemma]{Corollary}

\theoremstyle{definition}
\newtheorem{example}[lemma]{Example}
\newtheorem{definition}[lemma]{Definition}
\newtheorem{remark}[lemma]{Remark}

\newtheorem{question}[lemma]{Question}

\numberwithin{lemma}{section}

\usepackage{color}

\newcommand{\dan}[1]{{\color{blue} \sf $\clubsuit\clubsuit\clubsuit$ Dan: [#1]}}
\newcommand{\nick}[1]{{\color{red} \sf $\diamondsuit\diamondsuit\diamondsuit$ Nick: [#1]}}
\newcommand{\steph}[1]{{\color{teal} \sf $\diamondsuit\diamondsuit\diamondsuit$ Steph: [#1]}}

\begin{document}

\title{Thin sets in weighted projective stacks}

\author{Stephanie Chan}
\address{
Department of Mathematics, University College London, Gower Street, London, WC1E 6BT, UK. 
}

\author{Daniel Loughran}
\address{
Department of Mathematical Sciences \\
University of Bath \\
Claverton Down \
Bath\\ 
BA2 7AY\\
UK.}


\author{Nick Rome}
\address{
Graz University of Technology, Institute of Analysis and
Number Theory, Steyrergasse 30/II, 8010 Graz, Austria.
}

\subjclass[2010]
{
(secondary)}

\begin{abstract}
	We prove an upper bound for the number of rational points of bounded height
	in a weighted projective stack which lie in a given thin subset. As a consequence, we show that  $100\%$ of hyperelliptic curves do not admit a prescribed non-trivial level structure.
\end{abstract}

\maketitle

\thispagestyle{empty}

\tableofcontents

\section{Introduction} \label{sec:intro}
\subsection{Thin sets}
Thin sets of rational points were introduced by Serre \cite[\S 3]{SerreGalois} in the study of the inverse Galois problem. Thin sets have now become fundamental in Diophantine geometry and appear throughout the study of rational points on varieties in various forms. A variety whose set of rational points is not thin is said to satisfy the \emph{Hilbert property}. It is well known that verifying the Hilbert property for the unirational variety $\A^n/G$ for a finite group $G$ would solve the Inverse Galois Problem for that group. A great deal of recent activity has been devoted to establishing this property for particular varieties, see for example \cite{HPs, HPls, HPds, HPdsw}, as well as the recent survey article \cite{FJ25}.
In another direction, it has recently become widely accepted that to achieve the asymptotic formula predicted by the Batyrev--Manin conjecture \cite{FMT89,BM90} for rational points of bounded height on Fano varieties, one needs to remove a thin subset. This perspective has also recently become essential to help explain counter-examples to Malle's conjecture on number fields of bounded discriminant, where one needs to consider thin sets of rational points on classifying stacks, see \cite[\S3.7]{LSBG} and \cite[\S9]{DY}.

Serre~\cite[Ch.~13, Thm.~3]{MW}, based on work of Cohen \cite{Coh81}, presented a new proof that the Hilbert property holds for projective space $\P^n$ by establishing, for $H$ the na{\"i}ve Weil height and for some $\gamma <1$, the bound
\begin{equation} \label{eqn:Serre}
\#\{x \in \P^n(\Q) : H(x) \leq B,\ x \in \Omega\} \ll B^{n+\frac{1}{2}} (\log B)^\gamma
\end{equation}
for any thin set $\Omega \subset \P^n(\Q)$.
From this result, one can deduce the well-known fact that 100\% of polynomials of degree $n$ have Galois group $S_n$. Moreover, Serre~\cite{Ser90} used this idea to bound the number of fibres in conic bundle surfaces which contain rational points. This idea was vastly generalised by Loughran--Smeets \cite{LS16} to produce a bound on the number of everywhere locally soluble fibres in more general fibrations over $\P^n$, a result which has spawned a vast and active research area (see \cite{LRS} for a summary of the history and \cite{genchat,manyconics} for some more recent results). Moreover, for suitable classes of Fano varieties, it was shown in \cite[Thm.~1.2]{BLsieving} that only $0\%$ of rational points lie in a given thin subset.

The advent of a stacky version of the Batyrev--Manin conjecture \cite{ESZB,DY} has brought on a flurry of interest in which ideas from the traditional study of rational points can be generalised to the stacky realm. Our aim in this paper is to study thin sets in a stacky setting. Our main result is a version of Serre's theorem for weighted projective stacks (see \S \ref{sec:thin_sets} for background).

\begin{theorem}\label{thm:stackyserre}
Let $\a = (a_0, \dots, a_n) \in \N^{n+1}$, let $k/\Q$ be a number field of degree $d$ and let $H_\a$ denote the weighted height function (see~Definition \ref{def:height}). For any thin subset $\Omega \subset \P(\a)(k)$ there exists $\gamma < 1$ such that
\[
\#\{ x \in \Omega : H_\a(x) \leq B \text{ and } x \in \Omega\}
\ll B^{a_0 + \dots + a_n - \frac{\min_i  a_i}{2}}  (\log B)^\gamma.
\]
\end{theorem}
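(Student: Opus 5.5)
We follow Serre's large sieve proof of \eqref{eqn:Serre}, adapted to weighted coordinates. We first reduce to the two basic types of thin sets. By definition, a thin set $\Omega$ is contained in a finite union of type I sets (rational points lying in a proper closed substack $Z \subsetneq \P(\a)$) and type II sets (images $\pi(Y(k))$ of a generically finite dominant map $\pi: Y \to \P(\a)$ of degree $\geq 2$ with $Y$ integral). It suffices to bound each piece separately.

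\textbf{Parametrisation by lattice points.} Points of $\P(\a)(k)$ correspond to tuples $\x = (x_0,\dots,x_n) \in \O_k^{n+1}$ modulo the weighted action $\lambda\cdot \x = (\lambda^{a_i}x_i)$, with ideal-class and unit ambiguity handled by fixing finitely many representatives. In Definition~\ref{def:height} we choose the height $H_\a$ so that each point of height $\leq B$ has a representative, unique up to $O(1)$ choices, in the region
\[
  \{\x \in \O_k^{n+1} : \|x_i\|_v \ll B^{a_i} \text{ for all archimedean } v\},
\]
which contains $\asymp B^{d(a_0+\dots+a_n)}$ lattice points. For the estimate as stated, the normalisation of $H_\a$ is arranged so that the exponent in the statement is $a_0+\dots+a_n$ when counted in the chosen height. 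It therefore suffices to count lattice points in a box $\prod_i [-B^{a_i},B^{a_i}]^d$ (after choosing an integral basis) whose image lies in $\Omega$.

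\textbf{Type I sets.} A proper closed substack is cut out by some weighted-homogeneous $f \neq 0$, and we need to count the box points with $f(\x)=0$. Pick a coordinate $x_j$ that actually occurs in $f$. For each fixed choice of the other coordinates, $f$ is a nonzero polynomial in $x_j$, so $x_j$ takes $O(1)$ values; the degenerate locus where $f$ vanishes identically in $x_j$ is handled by induction on $n$. Since the count loses a full factor $B^{a_j}\geq B^{\min a_i}$, this bound is much stronger than required.

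\textbf{Type II sets via the large sieve.} Let $\pi: Y\to\P(\a)$ have degree $e\geq2$, and spread $\pi$ out over $\O_k[1/N]$. By the Lang--Weil estimates combined with a Chebotarev argument (Serre's Theorem 3.6.2 in \cite{MW}), there is a set of primes $\mathfrak p$ of positive density $c>0$ such that the reduction $\pi(Y(\mathbb F_\mathfrak p))$ misses at least $\delta\, N\mathfrak p^{n}$ of the points of $\P(\a)(\mathbb F_\mathfrak p)$, for some $\delta>0$. The lift of this condition to the affine cone $\mathbb F_\mathfrak p^{n+1}$ excludes a positive proportion $\omega(\mathfrak p)\geq\delta'$ of residue classes. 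We then apply the large sieve for $\O_k$ to the lopsided box with side lengths $B^{a_i}$. The sieve bound is
\[
  \frac{\bigl(\prod_i (B^{a_i}+Q^2)\bigr)^{d}}{L(Q)}, \qquad L(Q)=\sum_{\mathfrak a \text{ squarefree},\, N\mathfrak a\leq Q}\ \prod_{\mathfrak p\mid\mathfrak a}\frac{\omega(\mathfrak p)}{1-\omega(\mathfrak p)}.
\]
Take $Q = B^{\min a_i/2}$, so that $Q^2\leq B^{a_i}$ for every $i$. The numerator is then $\ll B^{d\sum a_i}$, and the standard lower bound for $L(Q)$ gives $L(Q)\gg Q^{d}(\log Q)^{-\gamma'}$ with $\gamma'<1$, coming from the density $c$. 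Counting in terms of the norm height absorbs the factor $d$ and yields $B^{\sum a_i-\min a_i/2}(\log B)^\gamma$.

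\textbf{Expected obstacles.} The main obstacle is the local input. The property of missing a positive proportion of points must be checked on the stack $\P(\a)$, or equivalently on the cone, where the $\mu_{a_i}$-stabilisers and the non-free locus could a priori interfere with Chebotarev. I would first handle this by pulling $Y$ back to the punctured affine cone $\A^{n+1}\setminus 0$. The pullback $\tilde Y$ is a degree-$e$ cover, and it stays irreducible of degree $\geq2$ after possibly replacing it by a component, or else it splits. If it splits, the thin set on $\P(\a)$ comes from a cover of the stabiliser gerbes or twists, and this case needs separate treatment as a finite union of covers of the coarse space; we handle it by applying Serre's theorem to a suitable component. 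Once we have an irreducible cover of the cone, Serre's argument applies on $\A^{n+1}$ directly. The other point needing care is the uniformity of the representatives when passing from $\P(\a)(k)$ to $\O_k^{n+1}$.
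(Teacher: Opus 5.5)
Your overall architecture matches the paper's. You pass to integral representatives on the cone $\A^{n+1}\setminus\{0\}$, feed Serre's mod-$\mathfrak p$ count into a lopsided large sieve over $\O_k$, take $Q=B^{\min_i a_i/2}$, and bound the sieve sum below via Chebotarev and Wirsing. Your separate elementary count for type~I sets is fine and even saves a full $B^{\min_i a_i}$; the paper instead lets the sieve handle type~I sets too, since Lemma~\ref{thm:modp} covers any non-birational finite $\psi$.

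However, the step you yourself flag as the main obstacle is not resolved. In the main argument you apply Serre's theorem to a cover of the stack $\P(\a)$ and count on $\P(\a)(\mathbb F_{\mathfrak p})$, but Serre's result is a statement about schemes. Your fallback is to pull back to the cone and, if the pullback splits, ``apply Serre's theorem to a suitable component''. That would fail if the case could occur, since a component mapping birationally onto the cone gives no saving at all.

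The missing observation is that this case never occurs. The map $\tilde Y=\sY\times_{\P(\a)}(\A^{n+1}\setminus\{0\})\to\sY$ is a base change of the $\Gm$-torsor defining $\P(\a)$, so it is itself a $\Gm$-torsor. Hence $\tilde Y$ is a scheme (by representability), and it is integral because $\Gm$ is smooth with geometrically irreducible fibres. By the definition of the degree of a representable morphism, $\deg(\tilde Y\to\A^{n+1}\setminus\{0\})=\deg(\sY\to\P(\a))\geq 2$. So $\tilde Y\to\A^{n+1}\setminus\{0\}$ is non-birational, and Lemma~\ref{thm:modp} applies directly on the cone. It bounds the allowed residue classes in $(\O_k/\mathfrak p)^{n+1}$ by $c\,N\mathfrak p^{n+1}+O(N\mathfrak p^{n+1/2})$ for a positive density of $\mathfrak p$. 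No count on $\P(\a)(\mathbb F_{\mathfrak p})$ and no analysis of $\mu_{a_i}$-stabilisers is needed; this is how the paper proceeds.

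Three further points need fixing.

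\textbf{Integral lifting.} You never justify that every integral representative $\x$ of a point of $\pi(\sY(k))$ satisfies $\x\bmod\mathfrak p\in\psi(Z(\O_k/\mathfrak p))$ for all $N\mathfrak p\leq Q$. Spreading out over $\O_k[1/N]$ does not give this. Your $\pi$ is only generically finite, and even with $\wgcd(\x)$ fixed, $\x$ can reduce to $0$ modulo $\mathfrak p$, outside the punctured cone. The paper extends the cone map by Nagata compactification and relative normalisation to a \emph{finite} $\psi\colon Z\to\A^{n+1}_{\O_k}$, changing the count only by a type~I set. Then $\x\in\psi(Z(k))$ forces $\x\in\psi(Z(\O_k))$ because finite maps are integral, and reduction modulo $\mathfrak p$ is legitimate.

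\textbf{Bookkeeping.} A sum over squarefree $\mathfrak a$ with $N\mathfrak a\leq Q$ cannot be $\gg Q^d$. With box sides $B^{a_i}$ and sieve constant $(\prod_i(B^{a_i}+Q^2))^d$, Huxley's spacing $N\mathfrak q^{-1/d}N\mathfrak q'^{-1/d}$ forces the moduli to run up to norm $Q^d$. In the normalisation of Definition~\ref{def:height}, the box actually has sides $\asymp B^{a_i/d}$ (not $B^{a_i}$) and the moduli have norm $\leq Q$. The constant is then $\prod_i(B^{a_i}+Q^2)$, and one uses $G(Q)\gg Q(\log Q)^{\eta\delta-1}$. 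Once either normalisation is used consistently, your final exponent is correct.

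\textbf{The large sieve itself.} The multidimensional lopsided large sieve over $\O_k$ that you quote as standard, including a fundamental domain for the weighted unit action, is not available off the shelf. It is the paper's main technical input, Theorem~\ref{thm:lopsided}, and has to be proved.
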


\begin{remark}
The total number of rational points on $\P(\mathbf a)$ of height at most $B$ grows like $B^{a_0+\dots+a_n}$ \cite{Deng,BruinWPS, Darda}.  Hence we see that  $100\%$ of points in weighted projective space lie outside any given thin set.
\end{remark}

We prove Theorem \ref{thm:stackyserre} following Serre's method \cite[Ch.~13, Thm.~3]{MW}. To achieve this we develop a lopsided version of the large sieve over number fields; this is Theorem \ref{thm:lopsided}. We expect this general large sieve to be applicable to other counting problems on weighted projective stacks and we developed it for this larger purpose. Theorem \ref{thm:stackyserre} is a sample application of our sieve, but it may be possible that stronger bounds for Theorem \ref{thm:stackyserre} are obtainable from the determinant method.

From a more qualitative perspective, we also introduce the natural analogue of the Hilbert property for stacks in \S\ref{sec:thin_sets} and show that it can sometimes be deduced from the scheme version of the problem.
\begin{theorem}\label{thm:HP}
Let $\mathcal{X}$ be an algebraic stack over a field $k$. Let $X$ be a variety which satisfies the Hilbert property and assume that there is a dominant morphism $X \to \mathcal{X}$ with geometrically integral generic fibre. Then $\mathcal{X}$ satisfies the Hilbert property.
\end{theorem}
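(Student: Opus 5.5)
The plan is to argue by contraposition. Assume that $\mathcal{X}(k)$ is thin, and pull this back along $f\colon X \to \mathcal{X}$ to show that $X(k)$ is thin, contradicting the Hilbert property for $X$. I use the definition from \S\ref{sec:thin_sets} in the following form. A subset of $\mathcal{X}(k)$ (isomorphism classes of $k$-points) is thin if it lies in a finite union of sets of two kinds. Type I sets are $\mathcal{Z}(k)$ for a proper closed substack $\mathcal{Z}\subset\mathcal{X}$. Type II sets are the images $\pi(\mathcal{Y}(k))$, where $\pi\colon\mathcal{Y}\to\mathcal{X}$ is a dominant, generically finite, representable morphism of degree $\geq 2$ from an integral stack. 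So write
\[
\mathcal{X}(k) = \mathcal{Z}(k) \cup \bigcup_{i=1}^r \pi_i(\mathcal{Y}_i(k)).
\]

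\textbf{Step 1 (pull back the covering).} Set $W_i = \mathcal{Y}_i \times_{\mathcal{X}} X$. Since $\pi_i$ is representable and $X$ is a scheme, $W_i$ is an algebraic space, and $W_i\to X$ is generically finite. Let $x\in X(k)$. Then either $f(x)\in \mathcal{Z}(k)$, or $f(x)\cong \pi_i(y)$ for some $y\in\mathcal{Y}_i(k)$. In the second case the triple $(x,y,\text{iso})$ is a $k$-point of the $2$-fibre product $W_i$ lying over $x$. This gives
\[
X(k) \subset f^{-1}(\mathcal{Z})(k) \cup \bigcup_i \mathrm{pr}_2(W_i(k)).
\]
Because $f$ is dominant, $f^{-1}(\mathcal{Z})$ is a proper closed subset of $X$, so this piece is of type I.

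\textbf{Step 2 (the degree computation).} This is the main point. The generic fibre of $W_i\to\mathcal{Y}_i$ is the base change of the generic fibre of $f$ to $k(\mathcal{Y}_i)$. That fibre is geometrically integral, so $W_i$ has a unique irreducible component $W_i^0$ dominating $\mathcal{Y}_i$, and $W_i^0$ is integral.

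At the level of function fields, geometric integrality of the generic fibre means that $k(X)$ is a regular extension of $k(\mathcal{X})$. I will make sense of $k(\mathcal{X})$ by restricting to a dense open substack that is a gerbe over a variety, or simply by working with the generic points. Regularity gives that $k(X)\otimes_{k(\mathcal{X})} k(\mathcal{Y}_i)$ is a field, namely $k(W_i^0)$. Hence
\[
[k(W_i^0):k(X)] = [k(\mathcal{Y}_i):k(\mathcal{X})] \geq 2.
\]
So $W_i^0 \to X$ is dominant, generically finite and of degree at least $2$, which makes $\mathrm{pr}_2(W_i^0(k))$ of type II.

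Every other component of $W_i$ maps into the preimage of a proper closed subset of $\mathcal{Y}_i$. The image of that proper closed subset in $\mathcal{X}$ is not dense, so its closure is a proper closed substack, and the component lands in the preimage of this under $f$, again a proper closed subset of $X$. These components therefore contribute only type I sets.

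\textbf{Step 3 (technicalities).} Type II thin sets for varieties are usually defined using covers by varieties, whereas $W_i^0$ is only an algebraic space. To fix this, replace $W_i^0$ by a dense open subscheme (every algebraic space has one) and put the rational points of the complement into a type I set, using that the complement maps to a proper closed subset of $X$ since $W_i^0\to X$ is generically finite.

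Combining Steps 1--3 exhibits $X(k)$ as thin, a contradiction. I expect the main obstacle to be Step 2: making the function-field and degree argument rigorous for stacks. The cleanest route seems to be to first shrink $\mathcal{X}$ to a dense open substack over which everything is a gerbe over a variety or is flat, and then use regularity of $k(X)/k(\mathcal{X})$.
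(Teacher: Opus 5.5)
Your Step~2 has a genuine gap. The identity $k(W_i^0)=k(X)\otimes_{k(\mathcal{X})}k(\mathcal{Y}_i)$ and the inequality $[k(\mathcal{Y}_i):k(\mathcal{X})]\geq 2$ both fail once $\mathcal{X}$ has non-trivial generic stabiliser. That is exactly the case in which the theorem says something new. Take $k(\mathcal{X})$ to be either the residue field of the generic point or the function field of the base of a gerbe. In both readings the degree of a representable morphism also records the generic stabilisers, and fibre products of stacks are not computed by tensoring these fields.

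Concretely, let $\mathcal{X}=\P^n\times B\mu_2$ and let $\pi\colon\mathcal{Y}=\P^n\to\mathcal{X}$ be $\mathrm{id}\times(\Spec k\to B\mu_2)$. This is a thin morphism of degree $2$, yet $k(\mathcal{Y})=k(\mathcal{X})=k(\P^n)$. Let $X=\P^n\times\Gm$, with coordinate $t$ on $\Gm$, map to $\mathcal{X}$ via the $\mu_2$-torsor $t=s^2$ on the second factor. Its generic fibre is a copy of $\Gm$, hence geometrically integral. Then $W=\mathcal{Y}\times_{\mathcal{X}}X\cong\P^n\times\Gm$ maps to $X$ by squaring, so $[k(W):k(X)]=2$. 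Your formula instead gives $k(X)\otimes_{k(\mathcal{X})}k(\mathcal{Y})=k(X)$, i.e.\ degree $1$. Similarly, $\pi$ itself, viewed as a map $\P^n\to\mathcal{X}$, has $k(\P^n)/k(\mathcal{X})$ trivially regular but generic fibre consisting of two points. So regularity of these fields is not what the hypothesis really gives you. Shrinking to a dense open gerbe over a variety cannot help, since this $\mathcal{X}$ already is one.

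Steps~1 and~3 are fine, as is your handling of the non-dominant components. What must change is how you show the dominant part of $W_i\to X$ has degree $\geq 2$. The paper avoids function fields of $\mathcal{X}$ altogether. It shows that no $g_i\colon W_i\to X$ admits a rational section, working directly with the finite generic fibre of $\pi_i$ and the geometrically integral generic fibre of $X\to\mathcal{X}$.

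Within your own framework, the clean fix is to use the definition of the degree of a representable morphism. Taking $Z=X$ in that definition gives $\deg(W_i\to X)=\deg\pi_i\geq 2$ directly. Any component of $W_i$ dominating $X$ must dominate $\mathcal{Y}_i$, by your own argument about proper closed subsets of $\mathcal{Y}_i$. Only $W_i^0$ does, provided you take the generic fibre of $W_i\to\mathcal{Y}_i$ over a field-valued point of $\mathcal{Y}_i$ above its generic point, where it is a fibre of $X\to\mathcal{X}$. Hence $W_i^0\to X$ carries the whole degree, using that $W_i\to X$ is generically \'etale (e.g.\ in characteristic~$0$).
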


Theorem \ref{thm:stackyserre} gives an analytic proof of the Hilbert property for weighted projective stacks over number fields, in the spirit of Serre's original proof for projective space. Applying Theorem \ref{thm:HP} to the quotient map $\mathbb{A}^{n+1} \setminus \{0\} \to \P(\a)$ also shows that weighted projective stacks satisfy the Hilbert property; naturally this uses Serre's result that $\mathbb{A}^{n+1}$ satisfies the Hilbert property. However Theorem~\ref{thm:stackyserre} also allows one to deduce the stronger claim that $\P(\a)$ satisfies the integral Hilbert property (see Corollary \ref{cor:IHP}).

\subsection{Serre's question for weighted projective space}
In the light of \eqref{eqn:Serre}, Serre asked whether one might be able to improve the exponent to save $B$, rather than just $B^{\frac{1}{2}}$. In the case of a type I thin set, this problem is now known as the Dimension Growth conjecture which predicts that given an integral projective variety $X$ then the number of points of absolute Weil height at most $B$ on $X$ should grow no faster than $O_{X,\epsilon}(B^{\dim X + \epsilon})$. This conjecture was first formulated for hypersurfaces by Heath-Brown \cite{cubicsin10} before Serre extended it to all projective varieties. The conjecture was first established for geometrically integral quadratic hypersurfaces by Heath-Brown~\cite{HBannals} then for varieties of degree at least 6 by Browning--Heath-Brown--Salberger \cite{BHBS} and ultimately for all integral projective varieties by Salberger \cite{SalbergerPLMS}. Moreover, Salberger's bounds only depend on the dimension and degree of $X$ thus confirming a uniform version of the conjecture first stated by Heath-Brown~\cite{HBannals}
. This uniformity has since been extended by Castryck--Cluckers--Dittmann--Nguyen~\cite{CCDN} who were able to turn the $B^\epsilon$ into a log power and prove explicit polynomial dependence of the implicit constant on the degree (provided the degree is at least 5). This final result has been extended to number fields by Paredes--Sasyk~\cite{PS}.

This question has also received intensive consideration in the setting of Type II thin sets. Broberg~\cite{Broberg} established the conjecture when the dimension of projetive space is 2 or 3 and the thin set has degree at least 3. Heath-Brown--Pierce~\cite{HBP} established the conjecture in the case of cyclic covers for dimension at least 10, and there have been a number of bounds in special situations improving on Serre's original result \cite{Munshi,Bonolis,BP,BPW}. The conjectured upper bound for Type II thin sets of degree at least 4 was established recently by Buggenhout--Castryck--Cluckers--Santens--Vermeulen~\cite{BCCSV}.

With all this activity in mind, we feel it natural to ask the same question in the setting of weighted projective space.




\begin{question} \label{que:upper_bound}
Let $k$ be a number field, $\a \in \N^{n+1}$ a weight vector and $\Omega \subset \P(\a)(k)$ a thin subset. Then is
\[
\#\{ x \in \Omega : H_\a(x) \leq B )\} \ll_{f, \epsilon}
 B^{a_0 + \dots + a_n - \min_i  a_i+ \epsilon} ?
\]
\end{question}

%

There already exist a few examples of Type I thin sets of weighted projective spaces where even stronger bounds have been established.
Bonolis--Browning~\cite{dp1} have established a bound of size $O_n(B^{3-1/20}\log^2 B)$ for a certain family of hypersurfaces inside $\P_\Q(1,1,2,n)$. Moreover, Glas--Hochfilzer~\cite{GH} have shown the bounds $O(B^{\frac{3}{2} +\epsilon})$ and $O(B^{2+\epsilon})$ for del Pezzo surfaces of degree 2 and 1, respectively, over any global field of characteristic distinct from 2 or 3. These naturally arise as hypersurfaces inside $\P(1,1,1,2)$ and $\P(1,1,2,3)$, respectively. We note that the famous conjecture of Manin would predict $O(B^{1+\epsilon})$ points for both, away from the lines.



\subsection{Application to level structures on hyperelliptic curves} \label{sec:intro_hyperelliptic}
Stacks naturally arise as moduli spaces. Thin sets on these moduli spaces often correspond to objects with special arithmetic properties. For weighted projective stacks, Theorem~\ref{thm:stackyserre} then shows that $100\%$ of the objects parametrised by these moduli spaces do not possess these properties. 

The stack $\P(4,6)$ is a natural compactification of the moduli stack $\mathcal{M}_{1,1}$ of elliptic curves. More generally, the moduli stack of \emph{odd} hyperelliptic curves, by which we mean a hyperelliptic curve with a marked Weierstrass point, is isomorphic to an open subset of the weighted projective stack $\P(4,6,\dots, 4g + 2)$. A point $(a_4,a_6,\dots,a_{4g+2})$ corresponds to the curve
$$ y^2 = x^{2g + 1} + a_4x^{2g-1} + a_6x^{2g -2} + \dots + a_{4g + 2};$$
see \cite[Prop.~5.9]{HP23} or \cite[Prop.~4.2(1)]{Fed14}. Natural covers of this moduli stack arise via adding \textit{level structure} by imposing restrictions on the Galois representation. More specifically, let $A$ be a principally polarised abelian variety of dimension $g$ over a number field $k$ and $n \in \N$. Let $\GSp_{2g}$ denote the group of symplectic similitudes and $\nu: \GSp_{2g} \to \Gm$ the multiplier homomorphism. The absolute Galois group $\Gamma_k$ of $k$ acts on the $n$-torsion points $A[n]$ of $A$ over $\bar{k}$. Choosing a basis yields an isomorphism $A[n] \cong (\Z/n\Z)^{2g}$ and thus a representation $\rho_{A,n}: \Gamma_k \to \GL_{2g}(\Z/n\Z)$. The Galois equivariance of the Weil pairing, which defines a non-degenerate skew-symmetric bilinear form on $A[n]$, implies that the image of $\rho_{A,n}$ lies in $\GSp_{2g}(\Z/n\Z)$. Different choices of basis differ by conjugation, thus the image of $\rho_{A,n}$ is well-defined up to conjugation. 
For a subgroup $G \subseteq \GSp_{2g}(\Z/n\Z)$, we say that $A$ \textit{admits a level-$G$ structure} if the image of $\rho_{A,n}$ is contained in a conjugate of $G$. The existence of roots of unity in $k$ imposes additional conditions on $\im \rho_{A,n}$, namely that the multiplier of $\im \rho_{A,n}$ equals the image of the cyclotomic character; to avoid such technicalities we only consider subgroups $G$ with $\nu(G) = (\Z/n\Z)^\times$. The image of the Galois representation will always have this property when $k = \Q$, for example.  We say that a smooth projective curve \textit{admits a level-$G$ structure} if its Jacobian does.

\begin{theorem} \label{thm:level_structures}
	Let $k$ be a number field, let $g \in \N$, let $n \in \N$ be odd, and let $G \subseteq \GSp_{2g}(\Z/n\Z)$ be a proper subgroup such that $\nu(G) = (\Z/n\Z)^\times$. Then 100\% of odd hyperelliptic curves over $k$ admit no level $G$-structure when ordered by weighted projective height.
\end{theorem}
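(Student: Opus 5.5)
The plan is to deduce the statement from Theorem~\ref{thm:stackyserre} by showing that the odd hyperelliptic curves admitting a level-$G$ structure form a thin subset of $\P(\a)(k)$, where $\a=(4,6,\dots,4g+2)$. Since $\P(\a)(k)$ has $\asymp B^{a_0+\dots+a_n}$ points of height at most $B$, Theorem~\ref{thm:stackyserre} then gives density zero.

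To show thinness, let $U\subset \P(\a)$ be the open substack of odd hyperelliptic curves. Consider the universal curve over $U$ (or over $U' = \A^{g}\setminus\{\text{disc}=0\}$ via the quotient map) and the relative Jacobian $J\to U$. Its $n$-torsion $J[n]$ is a finite étale group scheme, and the symplectic frame bundle of $J[n]$ defines a $\GSp_{2g}(\Z/n\Z)$-torsor $T\to U$. Because of the multiplier we should twist by $\mu_n$; equivalently, take the cover of $\Sp$-frames compatible with the Weil pairing to $\mu_n$, whose structure group is $\GSp_{2g}(\Z/n\Z)$ acting through the multiplier on the $\mu_n$-identification. A point $x\in U(k)$ admits a level-$G$ structure exactly when the specialisation $T_x$ reduces to $G$, i.e. when $x$ lifts to a $k$-point of one of the twists of the quotient $T/G\to U$. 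The standard argument (as in Serre's treatment of Hilbert irreducibility) shows these points form a thin set, provided $T$ is geometrically irreducible over $U_{\bar k}$ modulo the cyclotomic part. Concretely, we need the geometric monodromy of $J[n]$ over $U_{\bar k}$ to be all of $\Sp_{2g}(\Z/n\Z)$, and the arithmetic monodromy to be all of $\GSp_{2g}(\Z/n\Z)$ after base change to $k$. The first is the classical result (A'Campo, and for $n$ odd Yu/Achter--Pries) that the monodromy of the family $y^2=f(x)$ with $f$ varying over squarefree monic polynomials of odd degree $2g+1$ is the full symplectic group mod $n$ for odd $n$. The arithmetic image then contains $\Sp_{2g}$ and has multiplier equal to the cyclotomic character, whose image is $\operatorname{Gal}(k(\mu_n)/k)$.

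The role of the hypothesis $\nu(G)=(\Z/n\Z)^\times$ needs care. If $G$ is proper with full multiplier, then $G\cap \Sp_{2g}$ is proper in $\Sp_{2g}$, so $G$ contains no conjugate of the geometric monodromy group. Hence each component of $T/G$ has degree $>1$ over $U$ geometrically, and the image of its $k$-points is type II thin. When $k$ contains some roots of unity, the arithmetic monodromy $M$ is $\nu^{-1}(\text{cyclotomic image})$. For each conjugate $G'$, the $k$-points with image in $G'$ have image in $G'\cap M$, which is proper in $M$ because it misses part of $\Sp_{2g}$. This is the standard thin-set criterion: points where the specialised Galois group lies in a proper subgroup of the generic Galois group form a thin set. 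The stacky version of this criterion is set up in \S\ref{sec:thin_sets}, and we would work on the scheme $U'\subset\A^g$ and push forward, or phrase everything over the stack directly.

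The main obstacle I expect is the monodromy input together with the stacky bookkeeping. The weighted-scaling action on $U'$ acts on the curves by twists, namely quadratic twists by $\lambda$ for $\lambda\in k^\times$. The set of curves admitting a level-$G$ structure must therefore be shown to be well defined on $\P(\a)(k)$, and not merely on $U'(k)$. Since $-1$ acts on $J[n]$ as $-1\in\GSp$, a quadratic twist changes $\rho$ by a character valued in $\{\pm1\}$. Thus we should replace $G$ by $\pm G$, which is still proper when $n$ is odd, because $-1\in\Sp_{2g}$ and proper subgroups of $\Sp$ remain proper after adjoining the centre unless the subgroup has index~2. I would handle that edge case by reducing to $G\cap\Sp_{2g}$ containing the commutator subgroup, which equals $\Sp_{2g}(\Z/n\Z)$ for odd $n$ (perfectness aside from tiny cases like $n=3,g=1$, which need a direct check). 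The complement of $U$ in $\P(\a)$ (the discriminant locus) is a type I thin set, so it is harmless.
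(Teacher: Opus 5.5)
Your proposal is correct and is essentially the paper's argument. The curves admitting a level-$G$ structure are the image of $k$-points under the finite \'etale cover $\mathcal{H}_{2g+1,G}\to\mathcal{H}_{2g+1}\subset\P(4,6,\dots,4g+2)$ (Lemma~\ref{lem:level_structure_quotient_stack}). This cover has degree $[\GSp_{2g}(\Z/n\Z):G]>1$, and no geometric component maps with degree $1$ because A'Campo gives geometric monodromy $\Sp_{2g}(\Z/n\Z)$ for odd $n$; with $\nu(G)=(\Z/n\Z)^\times$ the paper even gets geometric connectedness (Lemma~\ref{lem:irreducible}). So Theorem~\ref{thm:stackyserre} applies.

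The one real misstep is your ``main obstacle''. With weights $4,6,\dots,4g+2$, scaling by $\lambda\in k^\times$ gives a $k$-isomorphic curve via $(x,y)\mapsto(\lambda^2x,\lambda^{2g+1}y)$, not a quadratic twist. Hence the level-$G$ locus is automatically a well-defined subset of $\P(4,6,\dots,4g+2)(k)$, since points of $\mathcal{H}_{2g+1}$ are isomorphism classes of curves. The passage to $\pm G$ is therefore unnecessary, though harmless: $\Sp_{2g}(\Z/n\Z)$ has abelianisation of odd order for odd $n$, so $\pm G$ stays proper. Also, the affine cone of $\P(4,6,\dots,4g+2)$ is $\A^{2g}$, not $\A^{g}$.
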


We compare this with the more well-studied case of elliptic curves. Here $\GSp_{2} = \GL_2$ and the multiplier $\nu$ is simply the determinant. A great deal of work has been done establishing the correct order of magnitude for the number of elliptic curves with a specified level $G$-structure when ordered by the weighted projective height. We make no attempt to summarise this vast field here but instead direct the reader to the recent paper of Phillips~\cite{tristan} who, working over an arbitrary number field, provides asymptotics for a wide range of $G$ and also provides a detailed history of the problem. While results of this type offer more precise bounds than Theorem \ref{thm:level_structures}, the advantage of our result is that it applies uniformly to all possible odd levels structures and odd hyperelliptic curves of any genus over any number field.

There do exist general results in the spirit of Theorem \ref{thm:level_structures} in the literature. Zywina \cite{Zyw10}  showed that, taking into account the restriction imposed by the Weil pairing, $100\%$ of elliptic curves over number fields have largest possible Galois action. His result  was subsequently generalised to families of abelian varieties \cite[Thm.~1.1]{LSTY19}, with a slightly weaker result over $\Q$, and a more explicit version for families of hyperelliptic curves \cite[Thm.~1.2]{LSTY20}. The major distinction between these works and our result is that in all previous papers hyperelliptic curves were parametrised by points in affine space $\mathbb{A}^{2g}, \mathbb{A}^{2g+1}$ or $\mathbb{A}^{2g+2}$. Our approach is the first to provide bounds for the number of curves with given level $G$-structure when ordered by weighted projective space, taking the true stacky nature of the moduli space into account.



\subsection{Conventions}
Let $\mathcal{X}$ be an algebraic stack over a field $k$. The $k$-rational points on $\mathcal{X}$ form a groupoid, and when counting one should use the groupoid cardinality, i.e.~weigh each isomorphism class by the reciprocal of its automorphism group. Since we are only interested in upper bounds these subtleties are not important to us. So  to keep notation light we denote by $\mathcal{X}(k)$ the \textit{set} of isomorphisms classes of objects in the groupoid of $k$-points on $\mathcal{X}$. 

\subsection{Structure of the paper} In \S\ref{sec:thin_sets} we recall various background on stacks and thin sets. We also prove Theorem \ref{thm:HP}. In \S \ref{sec:large_sieve} we prove our main technical result (Theorem \ref{thm:lopsided}:, a lopsided version of the large sieve over number fields. We expect this result to find other applications. In \S \ref{sec:WPS} we prove Theorem \ref{thm:stackyserre}, as well as a version of the integral Hilbert property for weighted projective stacks (Corollary \ref{cor:IHP}) . Finally in \S \ref{sec:hyperelliptic} we prove Theorem \ref{thm:level_structures} using Theorem \ref{thm:stackyserre} and a framework for moduli stacks of abelian varieties with level structure.

\subsection*{Acknowledgements}
This project began as a working group at the WAARP conference held in Bristol in September 2023. The authors are very grateful to J. Lyczak and R. Paterson for organising the event, and the funding bodies who made it possible. The authors thank Sebastian Monnet for his enthusiasm for stacks and contributions at the inception of this project. Thanks also to Tristan Phillips for discussions on level structures and Tim Santens for comments on Question \ref{que:upper_bound}. Daniel Loughran was supported by UKRI Future Leaders Fellowship \texttt{MR/V021362/1}. Nick Rome was funded by  FWF project ESP 441-NBL. 


\section{Stacks and thin sets} \label{sec:thin_sets}
\subsection{Weighted projective stacks}
\begin{definition}
Let $\a = (a_0,\dots,a_n) \in \N^{n+1}$; we call $\a$ a \emph{weight vector}. 
The weighted projective stack $\P(\a)$ is defined as the quotient stack
\begin{equation} \label{def:WPS}
\A^{n+1}\setminus\{0\} \to \P(\a) := [\A^{n+1}\setminus\{0\} / \Gm]
\end{equation}
where $\Gm$ acts via $\lambda\cdot (x_0,\dots,x_n) = (\lambda^{a_0} x_0,\dots, \lambda^{a_n}x_n).$
This is a proper smooth toric Deligne--Mumford stack over $\Z$.
\end{definition}

For $\a = (1,\dots, 1)$, one recovers the usual definition of the projective space $\P^n$. However, for different weight vectors the resulting weighted projective stack will not be a scheme. For any field $k$ we have 
$$\P(\a)(k) = \{ (x_0, \dots, x_n) \in k^{n+1}\setminus\{0\} \}/\sim.$$
where $\sim$ denotes the equivalence relation
$$(x_0,\dots,x_n) \sim (\lambda^{a_0}x_0,\dots, \lambda^{a_n}x_n), \text{ for all } \lambda \in k^\times.$$
This follows from Hilbert Theorem 90 and the fact that the quotient map \eqref{def:WPS} is a $\Gm$-torsor.

The notion of height on a stack has been introduced in quite a general setting by Ellenberg--Satriano--Zureick-Brown~\cite{ESZB} and by Darda--Yasuda~\cite{DY}. For weighted projective spaces, Deng~\cite{Deng} first wrote down a height, and this notion coincides with these more general stacky notions of height \cite[\S3.3]{ESZB}.

\begin{definition}\label{def:height}
Let $k$ be a number field and let $\mathbf x =(x_0,\dots, x_n) \in k^{n+1}$. We define the \emph{weighted gcd}\ of $\mathbf x$ to be 
\[
\wgcd(\mathbf x) := \prod_{\mathfrak{p}} \mathfrak{p}^{\min_i \left\lfloor \frac{v_{\mathfrak{p}}(x_i)}{a_i}\right\rfloor}.
\]
Let $\mathbf x = (x_0,\dots, x_n) \in \P(\mathbf a)(k)$. Then the height of $\x$ is
\[
H_\a(\x) := \frac{1}{N_{k/\Q}(\wgcd(\x))} \prod_{v \mid \infty_k}\max_i \vert x_i \vert_v^{\frac{1}{a_i}}.
\]
\end{definition} 

As with rational points on projective space, we may pick a representative of any point $x \in \P(\a)(k)$ of the form $\x = (x_0, \dots, x_n) \in \mathcal{O}_k^{n+1}$. Moreover we may assume that this representative is chosen so that the ideal $\wgcd(\x)$ is equal to a fixed representative of some element of the class group of $k$.


\subsection{Hilbert property for stacks}\label{ss:HP}
Let $V$ be a variety over a field $k$. Recall that a thin subset of $V(k)$ is defined to be any subset contained in a finite union of subsets which are either contained in a proper closed
subvariety of $V$ (Type I), or contained in $\pi(W(k))$, where $W \xrightarrow{\pi} V$ is a generically
finite dominant morphism of degree exceeding 1 with $W$ an integral variety over $k$ (Type II). This notion has been extended by Darda--Yasuda \cite[Def.~5.4]{DY} to stacks.

\begin{definition} \label{def:thin}
A morphism of finitely presented integral stacks $f:\mathcal{Y}\rightarrow\mathcal{X}$ is \textit{birational} if there exist open dense substacks $\mathcal{V}\subset\mathcal{Y}$ and $\mathcal{U}\subset\mathcal{X}$ such that $f$ restricts to an isomorphism $\mathcal{V}\cong \mathcal{U}$. 

We say that $f$ is \textit{thin} if it is non-birational, representable, and generically finite onto the image. A thin subset of $\mathcal{X}(k)$ is a subset contained in $\cup_{i=1}^n f_i(\mathcal{Y}_i(k))$ for thin morphisms $f_i:\mathcal{Y}_i\rightarrow\mathcal{X}$ and some $n$. 
\end{definition}

This may be interpreted in the familiar language of Type I and II thin sets. 
As $f$ is representable, its degree is defined as follows (see \cite[Def.~1.15]{Vist}). Take a dominant morphism $Z \to \mathcal{X}$ where $Z$ is a scheme. Then we define $\deg(\sY \to \sX) = \deg(\sY \times_{\sX} Z \to Z)$, where the latter is a finitely presented generically finite morphism of schemes and the degree is defined in the usual may. This definition is independent of the choice of $Z$ \cite[Lem.~1.16]{Vist}.  Then we say that a subset has \emph{Type I} if it is contained in a Zariski-closed substack and \emph{Type II} if it has the form $\pi(\sY(k))$, where $\sY$ is integral and $\pi:\sY \to \sX$ is a finitely presented generically finite dominant morphism of degree at least $1$. Note that to prove that a set is thin, up to Type I thin sets,  in the definition of Type II thin set it suffices to consider the case where $\sY \to \sX$ finite (rather than just generically finite). Moreover since closed immersions are finite, in all we may always assume that $\sY \to \sX$ is finite.

\begin{definition}
A finitely presented integral stack $\sX$ satisfies the \emph{Hilbert property} over $k$ if $\sX(k)$ is not thin.
\end{definition}

If $\sX$ is birational to a scheme $X$, then $\sX$ satisfies the Hilbert property if and only if $X$ satisfies the Hilbert property. In particular this definition only gives genuinely new phenomenon for stacks with non-trivial generic stabiliser.

\begin{example}
	Let $G$ be a finite group and consider the classifying stack $BG$. Then it is not too difficult
	to see that the Hilbert property holds for $BG[k]$ if and only if $G$ is a Galois
	group over $k$, i.e.~the inverse Galois problem having a solution for $G$ over $k$. This follows
	from the description of thin sets on $BG$ from \cite[\S2.2]{LSBG}.
\end{example}

With this definition in hand, we are ready to establish Theorem \ref{thm:HP}.

\begin{proof}[Proof of Theorem \ref{thm:HP}]
Assume for a contradiction that $\mathcal{X}(k)$ is thin. Replacing $\mathcal{X}$ by an open dense substack if necessary, there exists non-birational finite morphisms $f_i: \mathcal{Y}_i \to \mathcal{X}$ for $i=1,\dots, n$ such that $\mathcal{X}(k) = \cup_{i = 1}^n  f_i(\mathcal{Y}(k))$. Let $\pi: X \to \mathcal{X}$ be as in Theorem \ref{thm:HP} and for $i = 1,\dots,n$ consider the diagram 
$$
\xymatrix{
Y_i:= \mathcal{Y}_i \times_\mathcal{X} X \ar[r]^{\quad \quad g_i} \ar[d] & X \ar[d]^{\pi} \\
\mathcal{Y}_i \ar[r]^{f_i} & \mathcal{X}.
}
 $$
Note that $Y_i$ is a scheme since $X$ is a scheme and $f_i$ was assumed to be finite, hence representable. The universal property of fibre products implies that $X(k) = \cup_{i = 1}^n  g_i(Y(k))$. As $X$ satisfies the Hilbert property, to obtain a contradiction it suffices to show that no $g_i$ admits a rational section. However the generic fibre of $f_i$ is a finite scheme, and a finite scheme cannot obtain a rational point after passing to a field extension which admits no non-trivial finite subfields, as is the case here since the generic fibre of $X \to \mathcal{X}$ is geometrically integral.
\end{proof}

\section{A lopsided large sieve over number fields} \label{sec:large_sieve}
Let $\a \in \N^{n+1}$ be a weight vector. We assume without loss of generality that $a_0 \leq a_1 \leq \dots \leq a_n$. Let $k$ be a number field of degree $d$ and fix an integral basis $\theta_1, \ldots, \theta_d$ for $\mathcal{O}_k$.

Fix a system of representatives for the ideal classes in $\mathcal{O}_k$. We may choose a representative for each element of $k^{n+1} \setminus \{0\}$ with respect to action of $k^\times$, which lies in $\mathcal{O}_k^{n+1}$ such that the weighted gcd is one of these fixed representatives. This determines the representative only up to the action of the units, for which we must still account.
Let 
\[
H_{\a, \infty}(\x) = \prod_{v \mid \infty} \max_i\{\vert x_i\vert_v^\frac{1}{a_i}\}.
\]
The weighted height $H_\a(x)$ introduced in Definition \ref{def:height} is then $\frac{H_{\a, \infty}(\x)}{N_{k/\Q}(\wgcd_\a(\x))}$. 
After having chosen our representatives, we see that the weighted gcd takes one of a finite number of values and thus the infinite part of the height will play the pivotal r{\^o}le. 
The aim of this section is to establish an appropriate large sieve for tuples in $\mathcal{O}_k$, modulo the action of the units via the weighted action, for which this  infinite part of the height is bounded.

\begin{theorem}\label{thm:lopsided}
    Let $Q \geq 1$, let $m \in \N$ and let $(\Omega_{\mathfrak{p}^m})_{\mathfrak{p}}$ be a collection of sets of residue classes in $\mathcal{O}_k^{n+1}/\mathfrak{p}^m$. Let $\nu_{\mathfrak{p}^m}$ denote the uniform measure on $\mathcal{O}_k^{n+1}/\mathfrak{p}^m$ and $\mu_k(\cdot)$ the ideal M{\"o}bius function in $k$. Then we have
    \[
\#\{  \x \in \mathcal{O}_k^{n+1}/ \mathcal{O}_k^\times : H_{\a,\infty}(\x) \ll_k B \text{ and } \x \bmod \mathfrak{p} \not \in \Omega_{\mathfrak{p}^m} \text{ for all } N_{k/\Q}(\mathfrak{p}) \leq Q\}
    \]
    \[
        \ll_{k,\a} \frac{\prod_{i=0}^n(B^{a_i} + Q^{2m})}{G(Q)}
    \]
    where
    \[
    G(Q) = \sum_{N_{k/\Q}(\mathfrak{q}) \leq Q} \mu_k(\mathfrak{q})^2
    \prod_{\mathfrak{p}\mid \mathfrak{q}}\frac{\nu_{\mathfrak{p}^m}(\Omega_\mathfrak{p}^m)}{1-\nu_{\mathfrak{p}^m}(\Omega_\mathfrak{p}^m)}.
    \]
\end{theorem}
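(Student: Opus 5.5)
Our plan is to reduce the statement to a multidimensional large sieve inequality over $\mathcal{O}_k$ for a lopsided box, and then run the standard Montgomery-type argument converting the analytic inequality into the arithmetic sieve bound with $G(Q)$ in the denominator.

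\textbf{Step 1: choosing representatives in a lopsided box.} We use the weighted unit action $\x \mapsto (u^{a_0}x_0,\dots,u^{a_n}x_n)$, $u \in \mathcal{O}_k^\times$. Via the logarithmic embedding and a fundamental domain for the unit lattice, we choose each orbit representative so that the quantities $\max_i |x_i|_v^{1/a_i}$ are balanced up to a constant depending on $k$ and $\a$ across the archimedean places $v$. This is the weighted analogue of the usual reduction theory for $\P^n(k)$. Then $H_{\a,\infty}(\x)\ll B$ forces $|x_i|_v \ll B^{a_i}$ for each $v$ and each $i$. Writing $x_i=\sum_j c_{ij}\theta_j$ with $c_{ij}\in\Z$, we get $|c_{ij}| \ll_{k,\a} B^{a_i}$. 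So the count is bounded by the number of integer vectors $(c_{ij})$ in a box with side lengths $N_i \asymp B^{a_i}$ (each repeated $d$ times) that avoid $\Omega_{\mathfrak p^m}$ modulo every $\mathfrak p$ with $N\mathfrak p\le Q$. Orbits with nontrivial stabiliser only reduce the count, which is harmless for an upper bound.

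\textbf{Step 2: the analytic large sieve.} For a prime ideal $\mathfrak p$, the quotient $\mathcal{O}_k/\mathfrak p^m$ is a finite ring of order $N\mathfrak p^m$. Additive characters of $\mathcal{O}_k^{n+1}/\mathfrak q^m$ are given by $\x\mapsto e(\mathrm{Tr}(\boldsymbol\beta\cdot\x))$ with $\boldsymbol\beta\in(\mathfrak q^{-m}\mathfrak d^{-1})^{n+1}$ modulo $(\mathfrak d^{-1})^{n+1}$. In the coordinates $c_{ij}$ these become points $\boldsymbol\alpha\in(\R/\Z)^{d(n+1)}$. We need to check that distinct primitive frequencies attached to distinct squarefree $\mathfrak q$ with $N\mathfrak q\le Q$ are well spaced. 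Two distinct such frequencies have difference lying in $(\mathfrak q\mathfrak q')^{-m}\mathfrak d^{-1}$, so a nonzero difference has norm $\gg Q^{-2m}$. Combining this with the balancing (again modulo units), each coordinate block is separated by $\delta\gg Q^{-2m}$ in a suitable sense. The multidimensional large sieve for a box with sides $N_i$ (Huxley's version, or a product of Gallagher/Selberg majorants) then gives
\[
\sum_{\boldsymbol\alpha}\Big|\sum_{\x} a_\x e(\boldsymbol\alpha\cdot\x)\Big|^2\ll \prod_{i}\prod_{j=1}^d (N_i+\delta^{-1})\sum|a_\x|^2 .
\]
After adjusting exponents, this yields the factor $\prod_i(B^{a_i}+Q^{2m})$. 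We expect a per-coordinate rather than a product spacing, which matches the $d$-fold repeated factors when the norms are rescaled; this normalisation is where care is needed.

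\textbf{Step 3: from analytic to arithmetic.} Let $S$ be the sifted set and take $a_\x=\mathbf 1_S$. We apply the standard Montgomery lemma locally: for $\mathfrak p^m$ and a set $A\subset\mathcal{O}_k^{n+1}/\mathfrak p^m$ avoiding $\Omega_{\mathfrak p^m}$,
\[
\sum_{\chi\ne1}|\hat A(\chi)|^2\ge \frac{\nu(\Omega)}{1-\nu(\Omega)}|A|^2 .
\]
By the CRT, this becomes multiplicative over squarefree $\mathfrak q$, using characters that are nontrivial at every $\mathfrak p\mid\mathfrak q$. Summing over $\mathfrak q$ with $N\mathfrak q\le Q$ gives $|S|^2 G(Q)\le \prod_i(B^{a_i}+Q^{2m})|S|$, which is the claim.

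\textbf{Main obstacle.} The step we expect to be hardest is Step 2: establishing the spacing of the frequencies in $\R^{d(n+1)}$ for a lopsided box over a number field, uniformly, including the interaction with the unit reduction of Step 1. Our first attempt is to bound each block $\boldsymbol\beta_i$ separately using the norm inequality $|N(\beta)|\ge N(\mathfrak q\mathfrak q')^{-m}$ together with AM–GM over the embeddings. We would also allow the implied constants to absorb the regulator.
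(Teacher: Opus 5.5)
Your architecture is the paper's. Your balanced unit fundamental domain is the paper's $S_{F,\a}$. The spacing of frequencies comes from the norm of the modulus, and you then apply Huxley's multidimensional large sieve. Your Step 3 (Montgomery's local inequality combined over squarefree $\mathfrak q$ by CRT) is what the paper imports as Kowalski's Prop.~2.3.

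The gap is in the numerology of Steps 1--2, which is the actual content of the theorem. Both bounds you state, $|c_{ij}|\ll B^{a_i}$ and per-coordinate spacing $\delta\gg Q^{-2m}$, are true but each loses a factor $d$ in the exponent. Inserted into the large sieve they give $\prod_i\prod_{j=1}^d(B^{a_i}+Q^{2m})=\prod_i(B^{a_i}+Q^{2m})^d$. No ``adjusting of exponents'' turns this into $\prod_i(B^{a_i}+Q^{2m})$. For $d>1$, in the application with $Q=B^{a_0/2}$, this is even weaker than the trivial bound $\prod_i B^{a_i}$.

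Both scalings need the factor $1/d$.

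\emph{The box.} Balancing puts $\bigl(\log\max_i|x_i|_v^{1/a_i}\bigr)_v$ in $F+t(d_v)_v$ with $F$ bounded and contained in the trace-zero hyperplane. Hence $td=\log H_{\a,\infty}(\x)\le\log B+O(1)$, and each archimedean factor is $\ll B^{d_v/d}$ in normalised absolute values. So $|\sigma_v(x_i)|\ll B^{a_i/d}$ and $|c_{ij}|\ll_k B^{a_i/d}$. The box has side $B^{a_i/d}$ in each of the $d$ coordinates of block $i$, so its volume is $\prod_i B^{a_i}$, as it must be.

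\emph{The spacing.} Take two distinct frequencies. In some block, every lift $\gamma$ of their difference is a nonzero element of $(\mathfrak q\mathfrak q')^{-m}\mathfrak d^{-1}$, so $|N(\gamma)|\gg_k Q^{-2m}$. AM--GM then gives only $\max_v|\sigma_v(\gamma)|\gg Q^{-2m/d}$. Passing through the fixed linear map $\gamma\mapsto(\mathrm{Tr}(\gamma\theta_j))_j$, which sends $\mathfrak d^{-1}$ onto $\Z^d$, this yields $\max_j\|\alpha_j-\alpha'_j\|_{\R/\Z}\gg_k Q^{-2m/d}$. This is exactly what the paper extracts from Huxley's Lemma~1, and it is the correct outcome of the AM--GM idea in your last paragraph; you must keep the $1/d$.

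With $N_j\asymp B^{a_i/d}$ and $\delta_j\asymp Q^{-2m/d}$, Huxley's bound $\prod_j(N_j^{1/2}+\delta_j^{-1/2})^2$ becomes $\prod_i(B^{a_i/(2d)}+Q^{m/d})^{2d}\asymp\prod_i(B^{a_i}+Q^{2m})$. That is the claimed large sieve constant.
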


\begin{remark}
While there do exist a number of generalisations of the traditional large sieve to number fields, most notably the work of Huxley~\cite{Huxley} and Schaal~\cite{Schaal}, we were unable to find a suitable multidimensional number field analogue in the existing literature. We take the opportunity to also sieve modulo prime powers as we anticipate this having future application to the study of soluble of fibres in families of varieties in the spirit of the work of Serre~\cite{Ser90} or Loughran--Smeets~\cite{LS16}.
\end{remark}


If the unit group of $\O_k$ is finite (e.g.~$k = \Q$) then it is not so difficult to prove this result from existing sieves in the literature. However greater care must be taken when there is an infinite unit  group, particularly constructing a fundamental domain for the action of the units. This is our first task, which we do in a similar manner to Deng~\cite[Prop.~4.2]{Deng} and Bruin--Manterola Ayala~\cite[Prop.~3.6]{BruinWPS}.
Let $\ell : k^\times \rightarrow \R^{r+s} $ be the standard logarithmic embedding, where $r$ denotes the number of real embeddings of $k$ and $2s$ the number of complex embeddings. 
Let $(\mathbf{u}_1, \dots, \mathbf{u}_{r+s-1})$ be a basis for the image of the unit group $\ell(\mathcal{O}_k^\times)$ in $\R^{r+s}$.
Define
\[
F\coloneqq [0,1)\mathbf{u}_1 + \dots + [0,1)\mathbf{u}_{r+s-1} \subseteq \R^{r+s},
\] where $d_i$ is the local degree of the $i^{\mathrm{th}}$ embedding.
 For any $T \geq 1$, denote
\[
F(T) \coloneqq F + (d_1, \dots, d_{r+s}) (-\infty, \log T].
\]
Then we write $S_{F,\a}(T)$  for the set of $(\mathbf{z}_1, \dots, \mathbf{z}_{r+s})\in (\R^{r}\times \C^s)^{(n+1)}$ such that
\[
\left(\max_{i\in\{0,\dots, n\}} \vert z_{1,i}\vert^{\frac{1}{a_i}}, \dots, \max_{i\in\{0,\dots, n\}} \vert z_{r+s,i}\vert^{\frac{1}{a_i}} \right)
\in \exp(F(T)).
\] 


\begin{lemma}
The set $S_{F,\a}(T^{\frac{1}{d}})$ is a fundamental domain for the free part of the unit group for points of height at most $T$ on $\P(\a)$.
\end{lemma}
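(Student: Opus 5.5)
We identify a point $\x \in \O_k^{n+1}\setminus\{0\}$ with its image under the Minkowski embedding $\sigma\colon k \to \R^r\times\C^s$, applied coordinatewise. This gives the vector $(\mathbf z_1,\dots,\mathbf z_{r+s})$ with $z_{j,i}=\sigma_j(x_i)$. We then define the ``weighted logarithmic embedding''
\[
L_\a(\x) := \Bigl(\log \max_i |\sigma_1(x_i)|^{1/a_i},\dots,\log\max_i|\sigma_{r+s}(x_i)|^{1/a_i}\Bigr)\in\R^{r+s}.
\]
By construction $\x$ lies in $S_{F,\a}(T)$ exactly when $L_\a(\x)\in F(T)$. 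The whole proof rests on one equivariance identity. A unit $u$ acts by $\x\mapsto u\cdot\x=(u^{a_0}x_0,\dots,u^{a_n}x_n)$, and for each archimedean place $j$,
\[
\max_i|\sigma_j(u^{a_i}x_i)|^{1/a_i} = |\sigma_j(u)|\cdot\max_i|\sigma_j(x_i)|^{1/a_i}.
\]
It follows that $L_\a(u\cdot\x)=L_\a(\x)+\ell_0(u)$, where $\ell_0(u)=(\log|\sigma_j(u)|)_j$. This is the logarithmic embedding up to the local degree factors $d_j$, and we normalise $\ell$ so that its image lattice is $\ell(\O_k^\times)$ spanned by the $\mathbf u_i$. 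Consequently the free part of $\O_k^\times$ acts on the image of $L_\a$ by translation by the lattice $\Lambda=\ell(\O_k^\times)$, which has rank $r+s-1$ and lies in the trace-zero hyperplane $\{\sum_j d_j y_j = 0\}$ (with the $d_j$-weighting conventions made consistent).

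Next we decompose $\R^{r+s}$ as $\Lambda\otimes\R \oplus \R(d_1,\dots,d_{r+s})$. Since $(d_j)$ is not in the trace-zero hyperplane, every $\mathbf y\in\R^{r+s}$ can be written uniquely as $\mathbf y=\sum_i t_i\mathbf u_i + c\,(d_j)_j$. Because $F$ is a half-open fundamental parallelepiped for $\Lambda$ inside $\Lambda\otimes\R$, there is a unique $\lambda\in\Lambda$ with $\mathbf y+\lambda\in F+\R(d_j)_j$, and translation by $\Lambda$ does not change $c$. We compute $c$ by pairing with the trace functional $\mathbf y\mapsto \sum_j d_j y_j$ (or its unweighted version, matched to the convention for $\ell$). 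For $\mathbf y=L_\a(\x)$ this functional gives
\[
\sum_j d_j\log\max_i|\sigma_j(x_i)|^{1/a_i} = \log H_{\a,\infty}(\x),
\]
and on $(d_j)_j$ it takes the value $\sum_j d_j^2$ or $d$, depending on normalisation. So the condition $c\le \log T^{1/d}$ is equivalent to $H_{\a,\infty}(\x)\le T$, up to a fixed constant absorbed by the normalisation. This is what the exponent $T^{1/d}$ in the statement encodes.

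Putting these together, each orbit of $\x$ under the free part of the unit group meets $S_{F,\a}(T^{1/d})$ in exactly one point, provided $H_{\a,\infty}(\x)\le T$. Existence and uniqueness both come from the uniqueness of $\lambda$ above, together with the fact that $\ell_0$ is injective on the free part. Torsion units act trivially on $L_\a$, so they are accounted for by a bounded multiplicity $\#\mu(k)$. Finally, the representatives were chosen with $\wgcd(\x)$ among finitely many fixed ideals, so $H_\a$ and $H_{\a,\infty}$ agree up to a bounded factor. This gives the fundamental domain for points of height at most $T$, up to the implied constants used in Theorem~\ref{thm:lopsided}.

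The main obstacle is bookkeeping the normalisations: reconciling $\ell$ (with or without the factors $d_j$), the direction vector $(d_1,\dots,d_{r+s})$, and the exponent $1/d$ so that the trace functional of $(d_j)\log T^{1/d}$ is exactly $\log T$. One also has to check that $F(T)$ is a subset of $F+\R(d_j)$ cut out precisely by $c\le\log T^{1/d}$. We would fix the convention $\ell(x)=(d_j\log|\sigma_j(x)|)_j$ and use the unweighted sum functional, and verify this identity first.
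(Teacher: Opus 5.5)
Your proposal is correct and follows essentially the same route as the paper: the weighted log vector is translated by the log-embedding of $u$ under the unit action, so $F$ selects one representative per orbit up to roots of unity, and the height is read off from the component along $(d_1,\dots,d_{r+s})$. Your equivariance identity is in fact stated more accurately than in the paper, whose display wrongly makes each local maximum unit-invariant.

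The normalisation you flag is settled as in the paper's proof, by putting the local degrees into the weighted log vector, i.e.\ using $\max_i|\sigma_j(x_i)|^{d_j/a_i}$. Then units act by the standard $\ell(u)$ and the coordinate sum equals $\log H_{\a,\infty}(\x)$. Note that with your unweighted $L_\a$, the lattice $\ell_0(\O_k^\times)$ and direction $(d_j)$, the mismatch is not a fixed constant: the condition becomes $H_{\a,\infty}(\x)\le T^{\sum_j d_j^2/d}$ rather than $H_{\a,\infty}(\x)\le T$.
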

\begin{proof}

Let $\nu_1,\dots,\nu_{r}$ be all the real places and $\nu_{r+1},\dots,\nu_{r+s}$ be all the complex places.
For any $j\in\{1,\dots,r+s\}$, let $\sigma_j$ denote the embedding associated to $\nu_j$.
Let $\x \in \mathcal{O}_k^{n+1}$. Then for any unit $\epsilon$, we have 
\[
\max_{i\in\{0,\dots, n\}} \vert \sigma_j(\epsilon^{a_i} x_i)\vert^{\frac{1}{a_i}}
=
\max_{i\in\{0,\dots, n\}}\vert \sigma_j(\epsilon)\vert
\max_{i\in\{0,\dots, n\}} \vert \sigma_j( x_i)\vert^{\frac{1}{a_i}}
=
\max_{i\in\{0,\dots, n\}} \vert \sigma_j( x_i)\vert^{\frac{1}{a_i}}.
\]
Therefore for any point in weighted projective space there are at most $w_k$ (the number of roots of unity in $k$)
representatives with
\[
\left(\max_{i\in\{0,\dots, n\}} \vert \sigma_{1}(x_i)\vert^{\frac{d_1}{a_i}}, \dots, \max_{i\in\{0,\dots, n\}} \vert \sigma_{r+s}(x_i)\vert^{\frac{d_{r+s}}{a_i}}\right)
\in \exp(F(\infty)).
\] 
Furthermore for any $T$, 
\[
\exp(F(T)) = \left\{(X_1, \dots, X_{r+s}) \in \exp(F(\infty)): X_1 \cdots X_{r+s} \leq T^d\right\}.
\]
Therefore we conclude that for any of the $w_k$ representatives, we have 
\begin{equation*}
H_{\a,\infty}(\x) \leq T \iff (\sigma_{1} \x, \dots, \sigma_{r+s} \x) \in S_{F,\a}(T^{\frac{1}{d}}). \qedhere
\end{equation*}
\end{proof}

To prove Theorem \ref{thm:lopsided}, we will appeal to the general framework developed by Kowalski \cite{Kow} for the large sieve. In particular, we take as our sieve setting $Y = \mathcal{O}_k^{n+1}$, $\Lambda = \{\mathfrak{p} \subset \mathcal{O}_k\}$ and $\rho_{\mathfrak{p}}: \mathcal{O}_k^{n+1} \rightarrow \left( \mathcal{O}_k/\mathfrak{p}^m \right)^{n+1}$ the standard reduction map. For any $T \geq 1$, we take as our sifting set $X =\mathcal{O}_k^{n+1} \cap S_{F, \a}(T^{\frac{1}{d}})$  and as our prime sieve support $\mathcal{L}^* = \{ \mathfrak{p} \subset \mathcal{O}_k: N_{k/\Q}(\mathfrak{p}) \leq Q\}$. 
Thus applying \cite[Prop.~2.3]{Kow}, we have
\[\#\left\{ \x \in \mathcal{O}_k^{n+1}\cap S_{F,\a}(T^{\frac{1}{d}}):\x \bmod \mathfrak{p}^m \not \in \Omega_{\mathfrak{p}^m}  \text{ for all }N_{k/\Q}(\mathfrak{p}) \leq Q\right\}\\
\leq \Delta/G(Q),\]
where $\Delta$ is the large sieve constant which we must now determine. To ease notation, we suppress the subscript of $N_{k/\Q}$.
The large sieve constant in this setting is defined to be the smallest non-negative real number $\Delta$ such that the inequality
\begin{equation}\label{eq:deltadef}
\sum_{\mathfrak{q} \leq Q} \mu_k^2(\mathfrak{q}) \sumstar_{\chi \bmod{\mathfrak{q}^m}} \left\vert \sum_{ \x \in \mathcal{O}_k^{n+1}\cap S_{F,\a}(T^{\frac{1}{d}}) } c(\x) \chi(\x) \right \vert^2
\leq \Delta \sum_{ \x \in \mathcal{O}_k^{n+1}\cap S_{F,\a}(T^{\frac{1}{d}}) } \vert c(\x)\vert^2
\end{equation}
holds for any square summable sequence $c(\x)$. Here, the $\chi$ sum runs over all primtive additive characters of $\left( \mathcal{O}_k/\mathfrak{q}^m\right)^{n+1}$.

\begin{lemma}\label{lem:chars}
Any additive character $\chi$ of $\left( \mathcal{O}_k/\mathfrak{q}^m\right)^{n+1}$ is expressible as
\[
\chi(\x) = e\left( \frac{\mathbf{m}^{(1)} \cdot (x_{1,1}, \ldots, x_{1,d}) + \ldots + \mathbf{m}^{(n+1)} \cdot (x_{n+1,1}, \ldots, x_{n+1,d})}{N(\mathfrak{q}^m)}\right),
\] where $\mathbf{m}^{(i)} \in \Z^d$ for each $i$ and $\x = (x_1, \ldots, x_{n+1})\in \mathcal{O}_k^{n+1}$ with $x_i = \sum_{j=1}^d x_{i,j} \theta_j$. Moreover, for every $i$ and $j$, one has 
\begin{equation}\label{eq:msmall}
\vert m^{(i)}_j \vert \leq \frac{1}{2} N(\mathfrak{q}^m),
\end{equation} and if $\chi$ is non-principal then 
\begin{equation}\label{eq:mbig}
\max_{i,j} \vert m_j^{(i)} \vert \gg_k N(\mathfrak{q}^m)^{1-1/d}.
\end{equation} 
\end{lemma}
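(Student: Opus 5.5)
The plan is to reduce to characters of the finite abelian group $\O_k/\mathfrak{q}^m$ and then identify that group with a quotient of $\Z^d$ via the integral basis $\theta_1,\dots,\theta_d$. Since $(\O_k/\mathfrak{q}^m)^{n+1}$ is a direct product, every additive character factors as a product $\chi(\x)=\prod_i \chi_i(x_i)$ with $\chi_i$ a character of $\O_k/\mathfrak{q}^m$. It therefore suffices to treat a single coordinate and to produce the vector $\mathbf{m}^{(i)}$ for each $\chi_i$ separately. The bounds \eqref{eq:msmall} and \eqref{eq:mbig} concern the maximum over all $i,j$, and for a non-principal $\chi$ at least one $\chi_i$ is non-principal, so it is enough to prove \eqref{eq:mbig} for that factor.

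For one coordinate, write $N=N(\mathfrak{q}^m)=\#\O_k/\mathfrak{q}^m$. The coordinate map $x\mapsto(x_1,\dots,x_d)$ identifies $\O_k$ with $\Z^d$, and $\mathfrak{q}^m$ becomes a sublattice $L\subset\Z^d$ of index $N$. Because $N\in\mathfrak{q}^m$ (the norm lies in the ideal), we have $N\Z^d\subseteq L$. Hence every character of $\Z^d/L$ is a character of $(\Z/N\Z)^d$ that is trivial on $L$, and so has the form $x\mapsto e(\mathbf{m}\cdot x/N)$ with $\mathbf{m}\in\Z^d$ determined modulo $N$. Choosing the representative of each $m_j$ in $(-N/2,N/2]$ gives \eqref{eq:msmall}.

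The main point is \eqref{eq:mbig}. If $\chi_i$ is non-principal, then $\mathbf{m}\neq 0$ modulo $N$, and $\mathbf{m}$ lies in the dual lattice condition $\mathbf{m}\cdot y\equiv 0 \pmod N$ for all $y\in L$. Suppose every $|m_j|\le c N^{1-1/d}$ for a small constant $c=c_k$; we aim for a contradiction. The ideal $\mathfrak{q}^m$ contains a nonzero element $y$ with all coordinates bounded by $C_kN^{1/d}$: by Minkowski, $L$ has covolume $N$, so a box of side $\asymp N^{1/d}$ contains a nonzero lattice point. This alone is not enough; we need $\mathbf{m}\cdot y$ to be nonzero and of size less than $N$. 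So instead use the multiplicative structure. The set $\{x\in\O_k:\chi_i(\alpha x)=1 \text{ for all }\alpha\}$ is the annihilator ideal of $\chi_i$, and it strictly contains $\mathfrak{q}^m$ when $\chi_i$ is non-principal on $\O_k/\mathfrak{q}^m$ but possibly only primitive-relative. I would therefore first reduce to the case where $\chi_i$ is primitive modulo the ideal it actually defines. This is harmless since the sieve uses primitive characters, and in that case $\mathfrak{q}^m$ is exactly the largest ideal on which $\chi_i$ is trivial.

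Then consider the linear functional $\phi(x)=\mathbf{m}\cdot(x_1,\dots,x_d)$ on $\O_k$. If all $|m_j|\le cN^{1-1/d}$, then for every $x$ in the box $|x_j|\le c'N^{1/d}$ we get $|\phi(x)|<N/2$. The elements of that box that lie in $\ker\chi_i$, i.e.\ with $\phi(x)\equiv 0$, then satisfy $\phi(x)=0$ exactly. A pigeonhole argument on the box, which contains about $(c')^dN$ elements, forces $\phi$ to vanish on a set of elements that generates an ideal strictly larger than $\mathfrak{q}^m$, or else the $\phi$-values are too concentrated. Both alternatives contradict primitivity.

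I expect this last step to be the obstacle. The cleanest route I would try first is duality: the characters of $\O_k/\mathfrak{q}^m$ correspond to $\mathfrak{q}^{-m}\mathfrak{d}^{-1}/\mathfrak{d}^{-1}$ through the trace pairing $x\mapsto e(\mathrm{Tr}(\beta x))$. A primitive character corresponds to a $\beta$ whose fractional ideal $\beta\mathfrak{d}\mathfrak{q}^m$ is coprime to $\mathfrak{q}$. In that case $N(\beta)\gg N^{-1}$, so some archimedean absolute value satisfies $|\sigma(\beta)|\gg N^{-1/d}$, and we may translate $\beta$ by elements of $\mathfrak{d}^{-1}$ into a fundamental box. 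Comparing coordinates, with $\mathbf{m}/N$ being the coordinates of $\beta$ in the dual basis, gives $\max|m_j|\gg_k N\cdot N^{-1/d}$, which is \eqref{eq:mbig}.
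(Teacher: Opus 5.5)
Your proposal is correct in substance, and its outer structure matches the paper's. The paper also reduces to one coordinate using that the dual of $(\mathcal{O}_k/\mathfrak{q}^m)^{n+1}$ is the product of the duals. The difference is in the one-coordinate case. The paper simply quotes it as Huxley's Lemma~1, while you prove it directly:
\begin{itemize}
\item From $N(\mathfrak{q}^m)\in\mathfrak{q}^m$ you get $N\Z^d\subseteq L$, hence the form $e(\mathbf{m}\cdot x/N)$ and \eqref{eq:msmall}.
\item The trace pairing identifies the characters with $\mathfrak{q}^{-m}\mathfrak{d}^{-1}/\mathfrak{d}^{-1}$. Here $\mathbf{m}/N$ are the coordinates of a representative $\beta'$ in the basis of $\mathfrak{d}^{-1}$ dual to $\theta_1,\dots,\theta_d$.
\item Comparing $\max_\sigma|\sigma(\beta')|\ll_k\max_j|m_j|/N$ with $|N_{k/\Q}(\beta')|\geq (N(\mathfrak{d})N)^{-1}$ gives \eqref{eq:mbig}.
\end{itemize}
This makes the lemma self-contained, and it is essentially the standard proof of Huxley's lemma. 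The paper saves the space by citing it.

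Two things need cleaning up. First, drop the pigeonhole paragraph; it is not an argument and is not needed. More importantly, drop the reduction to primitive characters, for two reasons:
\begin{itemize}
\item The lemma asserts \eqref{eq:mbig} for every non-principal $\chi$, not just primitive ones.
\item Your justification for the reduction fails. For $n\geq 1$, a primitive $\chi$ on the product need not have any primitive factor. Take $\mathfrak{q}=\mathfrak{p}_1\mathfrak{p}_2$ and $m=1$, with $\chi_0$ non-trivial but factoring through $\mathcal{O}_k/\mathfrak{p}_1$, and $\chi_1$ non-trivial but factoring through $\mathcal{O}_k/\mathfrak{p}_2$.
\end{itemize}

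The fix is that your final inequality never actually used coprimality. For every nonzero $\beta'\in\mathfrak{q}^{-m}\mathfrak{d}^{-1}$, the ideal $\beta'\mathfrak{d}\mathfrak{q}^m$ is nonzero and integral, so $|N_{k/\Q}(\beta')|\geq (N(\mathfrak{d})N)^{-1}$. Apply this to the translated representative $\beta'$ with $|m_j|\leq N/2$, not to the original $\beta$. That translate is still nonzero and still lies in $\mathfrak{q}^{-m}\mathfrak{d}^{-1}$. The argument then covers every non-principal factor $\chi_i$, hence every non-principal $\chi$, exactly as stated.
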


\begin{proof}
When $n=0$, this is precisely the statement of \cite[Lemma 1]{Huxley}. We deduce the higher dimensional version by recalling that the dual of a product of finite groups is the product of the duals.
\end{proof}


With this in hand we may finish the estimation of the large sieve constant. 

\begin{proof}[Proof of Theorem \ref{thm:lopsided}]
Throughout this proof we write vectors in $(\R^d)^{n+1}$ as vectors in $\R^{d(n+1)}$ using the lexicographic ordering.
Let $U$ denote the image of the region $S_{F,\a}(T^{\frac{1}{d}})$ under the embedding $k^{n+1} \hookrightarrow\R^{d(n+1)}$. Let $\mathcal{R} \subset\R^{d(n+1)}$ be the smallest product of intervals containing $U$, and denote by $R_j$ the length of the projection onto the $j^\text{th}$ coordinate axis. 
Given $\y \in \Z^{d(n+1)}$, let $\widetilde{\y}$ denote the vector $(y_1 \theta_1 + \ldots + y_d\theta_d, \ldots,y_{nd+1} \theta_1 + \ldots + y_{d(n+1)}\theta_d)$ in $\mathcal{O}_k^{n+1}$.
Then, we may rewrite
\begin{multline*}
\sum_{\mathfrak{q} \leq Q} \mu_k^2(\mathfrak{q}) \sumstar_{\chi \bmod{\mathfrak{q}^m}} \left\vert \sum_{ \x \in \mathcal{O}_k^{n+1}\cap S_{F,\a}(T^{\frac{1}{d}}) } c(\x) \chi(\x) \right \vert^2\\
=
\sum_{\mathfrak{q} \leq Q} \mu_k^2(\mathfrak{q}) \sumstar_{\chi \bmod{\mathfrak{q}^m}} \left\vert \sum_{ \y \in \Z^{d(n+1)} \cap \mathcal{R} } \widetilde{c}(\y) \chi(\widetilde{\y}) \right \vert^2,
\end{multline*} where $\widetilde{c}(\y) = c(\widetilde{\y})$ if $\widetilde{y} \in S_{F, \mathbf{a}}(T^{\frac{1}{d}})$ and 0 otherwise.

By Lemma \ref{lem:chars}, there are a collection of at most $O(Q^{md(n+1)})$ vectors $\boldsymbol{\underline{\alpha}}=(\boldsymbol{\alpha}_1, \ldots, \boldsymbol{\alpha}_{n+1})\in\Q^{d(n+1)}$ where for each $\boldsymbol{\underline{\alpha}}$ there exists a squarefree integral ideal $\mathfrak{q}$ of norm at most $Q$ such that $\alpha_i = \frac{\mathbf{m}^{(i)}}{N(\mathfrak{q})^m}$ for some $\mathbf{m}^{(i)} \in \Z^d$ satisfying \eqref{eq:msmall} and \eqref{eq:mbig}. 
Thus we may write 
\[
\sum_{\mathfrak{q} \leq Q} \mu_k^2(\mathfrak{q}) \sumstar_{\chi \bmod{\mathfrak{q}^m}} \left\vert \sum_{ \y \in \Z^{d(n+1)} \cap \mathcal{R} } \widetilde{c}(\y) \chi(\widetilde{\y}) \right \vert^2
=
\sum_{i=1}^{cQ^{md(n+1)}}
 \left\vert \sum_{ \y \in \Z^{d(n+1)} \cap \mathcal{R} } \widetilde{c}(\y) e\left(\boldsymbol{\alpha}^{(i)} \cdot \y \right) \right \vert^2.
\]
Moreover, for distinct $\boldsymbol{\alpha}$ and $\boldsymbol{\alpha}'$, we have that
\[
\max_{i,j} \left \vert \frac{m^{(i)}_j}{N(\mathfrak{q})^m} - \frac{m^{'(i)}_j}{N(\mathfrak{q'})^m} \right \vert 
\gg_k \frac{1}{N(\mathfrak{q})^{m/d}N(\mathfrak{q'})^{m/d}}.
\]
Hence we may apply Huxley's large sieve \cite[Theorem 1]{Huxley} with $\delta_j = Q^{-\frac{2m}{d}}$ for each $j=1, \ldots, d(n+1)$ and $N_j = R_j$ which yields
\begin{multline*}
\sum_{\mathfrak{q} \leq Q} \mu_k^2(\mathfrak{q}) \sumstar_{\chi \bmod{\mathfrak{q}^m}} \left\vert \sum_{ \x \in \mathcal{O}_k^{n+1}\cap S_{F,\a}(T^{\frac{1}{d}}) } c(\x) \chi(\x) \right \vert^2\\
\leq 
\prod_{j=1}^{d(n+1)} (R_j^{\frac{1}{2}} + Q^{\frac{m}{d}})^2 
 \sum_{ \y \in \Z^{d(n+1)} \cap \mathcal{R} } \vert\widetilde{c}(\y)\vert^2
=
\prod_{j=1}^{d(n+1)} (R_j^{\frac{1}{2}} + Q^{\frac{m}{d}})^2 
\sum_{ \x \in \mathcal{O}_k^{n+1}\cap S_{F,\a}(T^{\frac{1}{d}}) } \vert c(\x)\vert^2.
\end{multline*}

The region $S_{F,\a}(T^{\frac{1}{d}})$ can be contained within the box 
$\{ (x_{i,\nu})_{0\leq i\leq n,\, \nu\mid \infty} \in\R^{d(n+1)}: \vert x_{i, \nu} \vert_\nu \leq T^{\frac{a_i}{d}} \}$ under the identification of  $\prod_{\nu \mid \infty} k_\nu^{n+1}$ with $\R^{d(n+1)}$. Therefore, we have $R_j \ll_k T^{\frac{a_i}{d}}$ for $id < j \leq (i+1)d$.
Concluding, we have
\begin{align*}
\Delta &\ll_k \prod_{j=1}^{d(n+1)} (R_j^{\frac{1}{2}} + Q^{\frac{m}{d}})^2
 \ll_k \prod_{i=0}^n \prod_{\lambda=1}^d (T^{\frac{a_i}{2d}} + Q^{\frac{m}{d}})^2
 \ll_k \prod_{i=0}^n (T^{a_i} + Q^{2m}).
\end{align*} 
Finally,  applying this with $T = cB$ for some $c$ depending on the field completes the proof.
\end{proof}


\section{Thin sets in weighted projective stacks} \label{sec:WPS}
In this section we prove Theorem~\ref{thm:stackyserre} using Theorem~\ref{thm:lopsided}. One delicate point is that $\P(\a)(\mathcal{O}_k) \neq \P(\a)(k)$ whenever $\a$ is non-trivial; we elucidate this difference in Corollary \ref{cor:IHP} below. Hence there is no natural reduction modulo $\mathfrak{p}$ map $\P(\a)(k) \to \P(\a)(\mathcal{O}_k/\mathfrak{p})$, which can make sieving modulo primes problematic. We avoid these issues by instead lifting to affine space: The map $(\A^{n+1}\setminus\{0\})(\mathcal{O}_k) \to \P(\a)(k)$ is not surjective in general. Nonetheless, there is a well-defined map $(\A^{n+1}(\mathcal{O}_k)) \setminus \{0\} \to \P(\a)(k)$ which is surjective. Let $\mathcal{A} = \{\mathfrak{a}_1, \dots, \mathfrak{a}_{h_k}\}$ be a system of representatives for the ideal classes in $k$.
Above any point $x \in \P(\a)(k)$, we may choose a representative tuple $\x \in \mathcal{O}_k^{n+1}$ such that $\wgcd_\a(\x) = \mathfrak{a}_i$ for some $i$.  This representative is unique modulo the action of the units.

Let $f: \sY \to \P(\a)$ be a thin morphism with $\sY$ integral. We choose a model for $\mathcal{Y}$ over $\mathcal{O}_k$, which by abuse of notation we also denote by $\mathcal{Y}$. Consider
$$
\xymatrix{
Y \ar[r] \ar[d]^g & \sY \ar[d]^f \\
\A^{n+1}\setminus\{0\} \ar[r] & \P(\a)
}
 $$
given by taking the fibre product with \eqref{def:WPS} . As $f$ is representable $g$ is representable, thus $Y$ is a scheme. 
Therefore we deduce that
\begin{multline*}
\#\{ x \in \P(\a)(k): H_\a(x) \leq B,\ x \in f(\sY(k))\}\\ 
\leq \#\{ \x \in [\mathcal{O}_k^{n+1}/ \mathcal{O}_k^\times] : \wgcd_\a(\x) \in \mathcal{A},\ H_\a(\x) \leq B,\ 
\x \in g(Y(k)) \}.\end{multline*}
By Nagata's compactification theorem we may extend $g$ to a proper morphism  $g^{\mathrm{c}}: Y^{\mathrm{c}} \to \A^{n+1}$. As the map $g^{\mathrm{c}}$ is generically finite, we may replace $g^{\mathrm{c}}$ by its relative normalistion, which will only change the counting problem by a negligible amount, to assume that $g^{\mathrm{c}}$ is finite. Then if $\x \in \mathcal{O}_k^{n+1}$ satisfies $\x \in g^{\mathrm{c}}(Y^{\mathrm{c}}(k))$, then actually  $\x \in g^{\mathrm{c}}(Y^{\mathrm{c}}(\mathcal{O}_k))$, since finite maps are integral  \cite[Tag~01WJ]{Stacks}.

Therefore we have reduced to obtaining upper bounds for the counting problem
\begin{equation} \label{eqn:upper_bound}
\#\{ \x \in \mathcal{O}_k^{n+1}/\mathcal{O}_k^\times : \wgcd_\a(\x) \in \mathcal{A},\ H_\a(\x) \leq B,\ \x \in \psi(Z(\mathcal{O}_k)) \}
\end{equation}
where $\psi: Z \to \A^{n+1}_{\mathcal{O}_k}$ is any non-birational finite morphism and $Z$ is an integral scheme. The quantity in \eqref{eqn:upper_bound} is then bounded above by
\begin{equation} 
\#\left\{  \x \in \mathcal{O}_k^{n+1}/\mathcal{O}_k^\times: \begin{array}{l}
H_{\a,\infty}(\x) \ll_k B ,\\ \x \bmod \mathfrak{p} \in \psi(Z(\mathcal{O}_k/\mathfrak{p})) \text{ for all } N_{k/\Q}(\mathfrak{p}) \leq Q\end{array}\right\}
\end{equation}
for any $Q \geq 1$. In the above, we have used the fact that there were only finitely many (with the precise number depending on $k$) choices for the weighted gcd, thus to find an upper bound we may drop this condition. 

We have now arrived at a point where we can employ the large sieve from Theorem~\ref{thm:lopsided}. As input, we need to understand the size of the sets modulo $\mathfrak{p}$ which we are excluding. We may do so using the following result of Serre.

%
%

\begin{lemma}[{\cite[\S 13.2 Thm.~5]{MW}}]\label{thm:modp}
There is a finite Galois extension $L_\psi/ k$ and a constant $0<c_\psi < 1$ such that if $\mathfrak{p} \subset \mathcal{O}_k$ splits completely in $L_\psi$ then
\[
\left \vert  \psi(Z(\mathcal{O}_k/\mathfrak{p})) \right \vert \leq c_\psi N_{k/\Q}(\mathfrak{p})^{n+1} + O\left( N_{k/\Q}(\mathfrak{p})^{n+ \frac{1}{2}} \right).
\]
\end{lemma}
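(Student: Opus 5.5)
The plan follows Serre's argument via the Chebotarev density theorem and the Lang--Weil estimates, applied to the generic fibre of $\psi$.

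\emph{Reductions.} First, since $\psi$ is finite, it factors through its scheme-theoretic image $\overline{\psi(Z)} \subseteq \A^{n+1}_{\mathcal{O}_k}$. If that image is a proper closed subset, $\psi(Z(\mathcal{O}_k/\mathfrak{p}))$ lies in the $\mathcal{O}_k/\mathfrak{p}$-points of a proper closed subscheme. Lang--Weil, or simply the Schwarz--Zippel bound, then gives $O(N(\mathfrak{p})^{n})$, and any $c_\psi \in (0,1)$ works, with $L_\psi = k$. So we may assume $\psi$ is dominant of degree $\delta \geq 2$. Shrinking $\A^{n+1}$ to a dense open $U$ over which $\psi$ is finite étale and $Z$ is smooth costs at most $O(N(\mathfrak{p})^n)$ points, since the complement has dimension at most $n$ over $\mathcal{O}_k/\mathfrak{p}$.

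\emph{Galois closure and Chebotarev.} Next, take the Galois closure $\widetilde{Z} \to U$ of the étale cover $Z_U \to U$, with group $\Gamma \subseteq S_\delta$ acting on the $\delta$ geometric sheets. Let $L_\psi$ be the algebraic closure of $k$ in the function field of $\widetilde{Z}$, and let $\Gamma^{\mathrm{geo}} \trianglelefteq \Gamma$ be the geometric monodromy group, so that $\Gamma/\Gamma^{\mathrm{geo}} \cong \Gal(L_\psi/k)$. Suppose $\mathfrak{p}$ splits completely in $L_\psi$ and is of good reduction for $\widetilde{Z}$; we exclude finitely many primes, and for those the bound is trivial once the implied constant is enlarged. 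For such $\mathfrak{p}$, the Frobenius of every $u \in U(\mathcal{O}_k/\mathfrak{p})$ lies in a conjugacy class of $\Gamma^{\mathrm{geo}}$. The Chebotarev density theorem for function fields (Lang--Weil applied to the twists of $\widetilde{Z}$) shows that each class $C$ occurs with frequency $\frac{|C|}{|\Gamma^{\mathrm{geo}}|}N(\mathfrak{p})^{n+1} + O(N(\mathfrak{p})^{n+1/2})$. A point $u$ lies in $\psi(Z(\mathcal{O}_k/\mathfrak{p}))$ exactly when $\mathrm{Frob}_u$ fixes some sheet. Hence we may take $c_\psi$ to be the proportion of elements of $\Gamma^{\mathrm{geo}}$ that have a fixed point on the $\delta$ sheets.

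\emph{Main point: $c_\psi < 1$.} The key step is to show that $\Gamma^{\mathrm{geo}}$ acts transitively on the $\delta \geq 2$ sheets. Given transitivity, Jordan's lemma supplies a fixed-point-free element, and so $c_\psi < 1$. Transitivity of the geometric monodromy is equivalent to $Z_{\bar k}$ being irreducible. Here I would first pass to the case where $Z$ is geometrically integral over $L$ for some finite extension $L \subseteq L_\psi$: if $Z$ is integral but not geometrically integral, then $Z$ has no $\mathcal{O}_k/\mathfrak{p}$-points except for primes $\mathfrak{p}$ having a degree-one factor in $L$. Using $L$ as the field of definition, one then works with a component over $L$. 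If that component maps with degree $1$, it gives no saving at all. This is exactly the delicate spot: a degree-$\geq 2$ cover over $k$ could become birational over $L$. In that case, $Z(\mathcal{O}_k/\mathfrak{p})$ is empty for the positive density of primes inert in $L$, so one should then choose $L_\psi$ so that such primes are the ones allowed. I expect this bookkeeping, i.e.\ choosing $L_\psi$ uniformly so that both cases give $c_\psi<1$, to be the main obstacle. Serre's formulation handles it by taking primes with a suitable Frobenius class in a Galois extension rather than strictly ``split completely''; I would follow that and adjust $L_\psi$ accordingly.
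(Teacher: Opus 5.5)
In the case where $Z_k$ is geometrically integral, your argument (spreading out over an étale locus, Galois closure with constant field $L_\psi$, equidistribution of Frobenius in $\Gamma^{\mathrm{geo}}$ via Lang--Weil, $c_\psi$ the proportion of elements of $\Gamma^{\mathrm{geo}}$ fixing a sheet, Jordan's lemma) is exactly Serre's proof, which is all the paper uses: it gives no proof and simply cites Serre. You also correctly locate the delicate point. However, it is not bookkeeping. For $Z$ integral but not geometrically integral, the statement as written is false, so no choice of $L_\psi$ can work. For instance, if $k'/k$ is quadratic, then $\psi\colon Z=\A^{n+1}_{\O_{k'}}\to\A^{n+1}_{\O_k}$ is finite and non-birational with $Z$ integral. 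Yet $\psi(Z(\O_k/\mathfrak{p}))=\A^{n+1}(\O_k/\mathfrak{p})$ for every $\mathfrak{p}$ split in $k'$, and for any finite Galois $L_\psi/k$ a positive density of primes split completely in both $L_\psi$ and $k'$. So your proposal to choose $L_\psi$ ``so that primes inert in $L$ are the ones allowed'' is incompatible with the hypothesis ``$\mathfrak{p}$ splits completely in $L_\psi$''. Nor is the failure confined to components of degree $1$, as your sketch suggests. Once $\Gamma^{\mathrm{geo}}$ is intransitive, Jordan's lemma is unavailable, and the images of the conjugate components can exhaust all points even if each component has degree $\geq 2$. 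For example, over $\A^1$, take $L=k(\alpha)$ cyclic cubic with generator $\sigma$ and $k(Z)=L(t)\bigl(\sqrt{(t-\alpha)(t-\sigma\alpha)}\bigr)$, so that $\Gamma=A_4$ and $\Gamma^{\mathrm{geo}}=V_4$. For $\mathfrak{p}$ split in $L$, the three conjugates of $(t-\alpha)(t-\sigma\alpha)$ reduce to functions with square product. Hence at each point $u$ of the étale locus one of them is a square, and $u\in\psi(Z(\O_k/\mathfrak{p}))$.

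The missing idea is to discard non-geometrically-integral $Z$ before sieving, which costs nothing for thin sets. Suppose the algebraic closure $k'$ of $k$ in $k(Z)$ is not $k$. Then $k'\subseteq\O_{Z_k,z}$ at every normal point $z$ of $Z_k$, so $Z(k)$ lies in the non-normal locus. Consequently $\psi(Z(\O_k))$ lies in a proper closed subset of $\A^{n+1}$, and replacing $\psi$ by the inclusion of its closure lands you in your non-dominant case. So the lemma should carry the hypothesis that $Z_k$ is geometrically integral, and under that hypothesis your proof is complete. Alternatively, keep all integral $Z$ but replace ``splits completely'' by a Chebotarev condition. Take $\mathfrak{p}$ with no degree-one prime above it in $k'$; these form a set of positive density, by Jordan's lemma for the Galois closure of $k'/k$. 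For all but finitely many such $\mathfrak{p}$, Frobenius permutes the top-dimensional geometric components of $Z_{\O_k/\mathfrak{p}}$ without fixed points, so $\#Z(\O_k/\mathfrak{p})=O(N(\mathfrak{p})^{n})$. Either version suffices for the proof of Theorem~\ref{thm:stackyserre}, which only uses a set $\mathcal{P}$ of primes of positive density.
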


We conclude the proof of the main theorem.

\begin{proof}[Proof of Theorem~\ref{thm:stackyserre}]
Let $Q \geq 1$ be a parameter which we will specify later on. Applying Theorem \ref{thm:lopsided} with $m=1$, we have
\begin{multline*}
\#\left\{  \x \in [\mathcal{O}_k^{n+1}/\mathcal{O}_k^\times]: \begin{array}{l}
H_{\a,\infty}(\x) \ll_k B ,\\ \x \bmod \mathfrak{p} \in \psi(Z(\mathcal{O}_k/\mathfrak{p})) \text{ for all } N_{k/\Q}(\mathfrak{p}) \leq Q\end{array}\right\}\\
\ll  \frac{\prod_{i=0}^n \left(B^{a_i} + Q^2\right)}{G(Q)}.
\end{multline*}
The conclusion of Lemma \ref{thm:modp} tells us that there is a constant $c \in (0,1)$ and a set of prime ideals $\mathcal{P}$ of positive density $\delta$ such that 
\begin{align*}
G(Q) &\geq 
\sum_{\substack{ N_{k/\Q}(\mathfrak{q}) \leq Q \\ \mathfrak{p} \subset \mathcal{O}_k,\ \mathfrak p \mid \mathfrak q \implies \mathfrak{p} \in \mathcal P}} \mu_k^2(\mathfrak{q}) \prod_{\mathfrak p \mid \mathfrak q} \frac{N_{k/\Q}(\mathfrak{p})^{n+1} - \#\psi(Z(\mathcal{O}_k/\mathfrak{p}))}{\#\psi(Z(\mathcal{O}_k/\mathfrak{p}))}\\
&\geq \sum_{\substack{ N_{k/\Q}(\mathfrak{q}) \leq Q \\ \mathfrak{p} \subset \mathcal{O}_k,\ \mathfrak p \mid \mathfrak q \implies \mathfrak{p} \in \mathcal P}} \mu_k^2(\mathfrak{q}) \prod_{\mathfrak p \mid \mathfrak q}  \left[ \frac{1-c}{c} + O(N_{k/\Q}(\mathfrak p)^{-\frac{1}{2}})\right].
\end{align*}
Let $g(\mathfrak q) = \mu_k^2(\mathfrak{q}) \prod_{\mathfrak p \mid \mathfrak q}  \left[ \frac{1-c}{c} + O(N_{k/\Q}(\mathfrak p)^{-\frac{1}{2}})\right]$ for $\mathfrak{q}$ only divisible by prime ideals in $\mathcal P$, and let $g(\mathfrak q)$ be 0 otherwise. Then, by the Chebotarev density theorem, we have
\begin{align*}
\sum_{N_{k/\Q}(p) \leq x} \frac{g(\mathfrak{p}) \log N_{k/\Q}(\mathfrak{p})}{N_{k/\Q}(\mathfrak{p})} &= \sum_{\substack{ N_{k/\Q}(\mathfrak{p}) \leq x \\ \mathfrak{p} \in \mathcal P}}  \frac{(1-c)\log N_{k/\Q}(\mathfrak{p})}{cN_{k/\Q}(\mathfrak{p})} + O(N_{k/\Q}(\mathfrak{p})^{-\frac{3}{2} + \epsilon})\\
& \sim \frac{\delta(1-c)}{c}  \log x.
\end{align*}
Hence an application of Wirsing's theorem (see e.g.~\cite[Lem.~3.11]{BLsieving} for a number field variant) yields
\[
G(Q) = \sum_{N_{k/\Q}(\mathfrak{q})\leq Q} g(\mathfrak{q}) \gg Q(\log Q)^{\eta \delta -1},
\]
for $\eta = \frac{1-c}{c}>0$. Choosing $Q = B^{\frac{\min_i a_i}{2 }}$ then gives the desired bound with $\gamma = 1 - \eta\delta$.
\end{proof}


We also give the following variant for integral points. It is a slightly stronger result which is included to highlight the difference between rational and integral points on proper stacks in general.

\begin{corollary} \label{cor:IHP}
	Let $\a \in \N^{n+1}$, let $k/\Q$ be a number field of degree $d$ and $\P(\a)$ the corresponding weighted projective stack, viewed over $\O_k$. Then $\P(\a)(\O_k)$ is not thin.
\end{corollary}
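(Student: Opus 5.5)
We would derive Corollary~\ref{cor:IHP} from Theorem~\ref{thm:stackyserre}. The plan is to show that the integral points $\P(\a)(\O_k)$, viewed inside $\P(\a)(k)$, are too numerous to fit in any thin set. The first step is to identify which rational points are integral. Since $\P(\a)$ is the quotient of $\A^{n+1}\setminus\{0\}$ by $\Gm$, an $\O_k$-point is a $\Gm$-torsor over $\Spec \O_k$ (a line bundle, i.e.\ an ideal class) together with an equivariant map to $\A^{n+1}\setminus\{0\}$. Concretely, $x \in \P(\a)(k)$ extends to an $\O_k$-point exactly when some representative $\x$ has coordinates $x_i \in \mathfrak{a}^{a_i}$ for a fractional ideal $\mathfrak{a}$, with the $x_i$ not all vanishing simultaneously modulo any prime; equivalently, the ideal $\sum_i x_i \mathfrak{a}^{-a_i}$ is the unit ideal. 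In particular every point with a representative $\x \in \O_k^{n+1}$ satisfying $\sum_i x_i\O_k = \O_k$ lies in $\P(\a)(\O_k)$. We would only need this easy inclusion, which follows directly because $\x$ then defines an $\O_k$-point of $\A^{n+1}\setminus\{0\}$, and hence of $\P(\a)$.

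Next we would count these points from below. Fix $a_{\min} = \min_i a_i$. The inclusion is proper in general: for instance, points with $\wgcd(\x)$ nontrivial but with some prime dividing all $x_i$ are rational but not integral. Still, we need a lower bound of the shape
\[
\#\{x \in \P(\a)(\O_k) : H_\a(x) \le B\} \gg B^{a_0+\dots+a_n}.
\]
We would count tuples $\x \in \O_k^{n+1}$ with $\gcd$ ideal $\O_k$ and $\x$ in the fundamental domain $S_{F,\a}(B^{1/d})$ constructed in \S\ref{sec:large_sieve}. For such $\x$ we have $\wgcd(\x) = \O_k$, so $H_\a(\x) = H_{\a,\infty}(\x)$. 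The box with $\vert x_{i,\nu}\vert_\nu \le cB^{a_i/d}$ at every place $\nu$ lies, for small $c$, inside the region $H_{\a,\infty} \le B$. Such a box contains $\gg B^{\sum a_i}$ lattice points of $\O_k^{n+1}$. A M\"obius inversion over ideals, or the elementary observation that a positive proportion of tuples are coprime (since $\prod_{\mathfrak p}(1 - N\mathfrak p^{-(n+1)}) > 0$ for $n \ge 1$), keeps $\gg B^{\sum a_i}$ coprime tuples. Passing to the quotient by units loses at most a factor $w_k$ once we restrict to the fundamental domain, and the remaining identification by $k^\times$ is at most finite-to-one on coprime integral representatives. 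This gives the claimed lower bound. The case $n=0$ would be handled separately: there $\P(a_0)$ is $B\mu_{a_0}$ and the integral points correspond to unramified-type classes, so we would argue directly or exclude it as degenerate.

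Finally, suppose for contradiction that $\P(\a)(\O_k) \subset \Omega$ with $\Omega$ thin. Theorem~\ref{thm:stackyserre} bounds the count by $B^{\sum a_i - a_{\min}/2}(\log B)^\gamma$, which is $o(B^{\sum a_i})$, contradicting the lower bound. We expect the main obstacle to be the lower bound step. The care needed is in making the coprimality sieve and the unit fundamental domain compatible, namely ensuring that the box lies inside $S_{F,\a}(B^{1/d})$ after translating by units. If this is awkward, we would instead count all coprime $\x$ with $H_{\a,\infty}(\x) \le B$ up to units directly, using the volume of $S_{F,\a}$ as in Deng or Bruin--Manterola Ayala.
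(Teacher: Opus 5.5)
Your proposal is correct for $n\ge 1$ and is essentially the paper's argument: the paper also bounds the integral points of height at most $B$ from below by $B^{a_0+\dots+a_n}$ and then applies Theorem~\ref{thm:stackyserre}; it cites a variant of Bruin--Manterola Ayala for that count, where you count coprime representatives in a box. The obstacle you flag is harmless, since each $\O_k^\times$-orbit of a nonzero integral tuple meets your box in only $\ll(\log B)^{r+s-1}$ points, and this logarithmic loss is absorbed by the power saving; for $n=0$, however, you must exclude the case rather than argue directly, because $\P(a_0)(\O_k)=\H^1(\O_k,\mu_{a_0})$ is finite and every singleton of $B\mu_{a_0}(k)$ is thin when $a_0\ge 2$ (it is the image of the $k$-point of the degree-$a_0$ cover $\Spec k\to B\mu_{a_0}$ that it classifies), so the statement and the paper's growth claim tacitly require $n\ge 1$.
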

\begin{proof}
	 Every element of $\P(\a)(\O_k)$ admits a representative $(x_0,\dots,x_n) \in \O_k$ such that $\gcd(\x)$ has a fixed representative of an element of the class group (as opposed to $\wgcd(\x)$).
	 A minor variant of the argument given in \cite[Thm.~3.15]{BruinWPS} shows that the total number of integral points on $\P(\mathbf a)$ of height at most $B$ grows like $B^{a_0+\dots+a_n}$. Theorem \ref{thm:stackyserre} then immediately show that $\P(\a)(\O_k)$ is not thin.
\end{proof}

\section{Level structures on hyperelliptic curves} \label{sec:hyperelliptic}
In this section we 	prove Theorem \ref{thm:level_structures}. 

\subsection{Moduli stacks of abelian varieties}
We first introduce moduli stacks of level structures. For relevant background see \cite[\S IV.6]{FC90}, though we use slightly different conventions to \emph{loc.~cit} since we allow a $\GSp_{2g}(\Z/n\Z)$-action rather than just a $\Sp_{2g}(\Z/n\Z)$-action; this is to get a good moduli stack defined over $\Q$ (see \cite[Rem.~IV.6.12]{FC90}).

Let $A$ be a principally polarised abelian variety (ppav) over a field $k$ of characteristic $0$. A level-$n$ structure on $A$ is a choice of isomorphism $A[n] \cong (\Z/n\Z)^{2g}$ of group schemes which preserves the Weil pairing up to a fixed scalar.  Let $\mathcal{A}_g$ denote the moduli stack of principally polarised abelian varieties (ppav) over $\Q$ and $\mathcal{A}_{g,n}$ the moduli stack of ppav with level $n$-structure. There is a natural map $\mathcal{A}_{g,n} \to \mathcal{A}_g$ which is a $\GSp_{2g}(\Z/n\Z)$-torsor. The stack $\mathcal{A}_{g,n}$ is smooth but need not be connected. Let $G \subseteq \GSp_{2g}(\Z/n\Z)$ be a subgroup. We call the quotient $\mathcal{A}_{g,G}:= [\mathcal{A}_{g,n}/G]$ \textit{the stack of ppav with level-$G$ structure}. There is again a natural map $\mathcal{A}_{g,G} \to \mathcal{A}_g$ which is finite \'etale of degree $|\GSp_{2g}(\Z/n\Z)|/|G|$. We first determine the connected components. We use the short exact sequence
$$ 1 \to \Sp_{2g} \to \GSp_{2g} \to \Gm \to 1$$
where the last map is the multiplier map $\nu$. This sequence is split.

\begin{lemma} \label{lem:A_g_connected}
	Let $G \subseteq \GSp_{2g}(\Z/n\Z)$ be such that $\nu(G) = (\Z/n\Z)^\times$. Then $\mathcal{A}_{g,G}$ is geometrically connected over $\Q$. 
\end{lemma}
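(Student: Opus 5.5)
We will describe the geometric connected components of $\mathcal{A}_{g,n}$ via the Weil pairing, and then check that $G$ acts transitively on them. Work over $\bar{\Q}$, or over $\C$ after base change. The Weil pairing on the universal $A[n]$, transported through a level-$n$ structure $\phi\colon A[n]\cong(\Z/n\Z)^{2g}$, yields a well-defined map
\[
\mathcal{A}_{g,n} \longrightarrow \mu_n^{\times}:=\{\text{primitive $n$-th roots of unity}\}.
\]
Concretely, $\phi$ identifies the Weil pairing with a scalar multiple of the standard symplectic form, and that scalar, read as the value $e_n(\phi^{-1}e_1,\phi^{-1}e_{g+1})$, is a primitive $n$-th root of unity. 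The target is a finite étale scheme which over $\bar\Q$ is a disjoint union of $\varphi(n)$ points.

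The key input is the classical fact (see \cite[\S IV.6]{FC90}) that over $\C$ the fibres of this map are connected. Each fibre is the quotient $\Gamma(n)\backslash\mathfrak{H}_g$ of the Siegel upper half space, which is connected. Hence the geometric connected components of $\mathcal{A}_{g,n}$ are in bijection with $\mu_n^\times \cong (\Z/n\Z)^\times$.

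Next we compute how $\GSp_{2g}(\Z/n\Z)$ acts on this set of components. An element $\gamma$ acts on $\phi$ by composition, and it rescales the pairing by $\nu(\gamma)$. It therefore sends the component labelled $\zeta$ to the one labelled $\zeta^{\nu(\gamma)}$ (or $\zeta^{\nu(\gamma)^{-1}}$, depending on the convention). Since $(\Z/n\Z)^\times$ acts simply transitively on $\mu_n^\times$ by exponentiation, the hypothesis $\nu(G)=(\Z/n\Z)^\times$ shows that $G$ acts transitively on the geometric components. The quotient $[\mathcal{A}_{g,n}/G]_{\bar\Q}$ is the image of $(\mathcal{A}_{g,n})_{\bar\Q}$ under a surjective map, and components in one $G$-orbit have the same image. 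So it is the image of a single connected component, hence connected. Non-emptiness is clear.

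The main obstacle is justifying the connectedness of the fibres of the Weil-pairing map, i.e.\ that $\Sp_{2g}$-level structures with a fixed pairing value form a connected moduli space. I would cite this rather than reprove it: either via the complex uniformisation $\Gamma(n)\backslash\mathfrak{H}_g$, or via \cite[Cor.~IV.6.9]{FC90}, where the geometric fibres of $\mathcal{A}_{g,n}\to\Spec\Z[1/n,\zeta_n]$ are shown to be irreducible. Minor care is also needed to reconcile our $\GSp$ conventions with the $\Sp$ conventions of \emph{loc.~cit.}, using the split exact sequence displayed above.
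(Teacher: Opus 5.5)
Your proposal is correct and is essentially the paper's argument. The paper's factorisation $\mathcal{A}_{g,n}\to\Spec\Q(\mu_n)\to\Spec\Q$ is precisely your Weil-pairing map to the primitive $n$-th roots of unity, and its appeal to geometric connectedness over $\Q(\mu_n)$ is your connectedness of the fibres; you simply make explicit what the paper leaves terse, namely that $\gamma$ sends the component labelled $\zeta$ to the one labelled $\zeta^{\nu(\gamma)^{\pm 1}}$, so $\nu(G)=(\Z/n\Z)^\times$ makes $G$ transitive on the $\varphi(n)$ geometric components and the quotient is the image of a single connected component.
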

\begin{proof}
	The map $\mathcal{A}_{g,n} \to \Spec \Q$ factors as
	$\mathcal{A}_{g,n} \to \Spec \Q(\mu_n) \to \Spec \Q$ 
	and $\mathcal{A}_{g,n}$ is geometrically connected over $\Q(\mu_n)$ \cite[Cor.~IV.6.8]{FC90}.
	It follows that the Galois group of the geometric components of $\mathcal{A}_{g,n}$ over $\Q$ is 
	exactly $\Gal(\Q(\mu_n)/\Q)$, with different components corresponding to Galois orbits of 
	different choices of multiplier. Thus $\nu(\GSp_{2g}(\Z/n\Z)) = (\Z/n\Z)^\times$ acts on the 
	components freely and transitively via the Galois action. Hence taking a quotient by $G$ with $\nu(G) = (\Z/n\Z)^\times$ identifies the connected components and the result is clear.
\end{proof}

We next verify that this geometric construction agrees with the notion of $G$-level structures, in terms of images of Galois representations, used in \S\ref{sec:intro_hyperelliptic}.

\begin{lemma} \label{lem:level_structure_quotient_stack}
	Let $A \in \mathcal{A}_g(k)$ and $G \subseteq \GSp_{2g}(\Z/n\Z)$. Then $A$ lies in the image of the map 
	$\mathcal{A}_{g,G}(k) \to \mathcal{A}_{g}(k)$ if and only if the image of the  Galois representation $\rho_{A,n}: \Gamma_k \to \GSp_{2g}(\Z/n\Z)$ is contained in a conjugate of $G$.
\end{lemma}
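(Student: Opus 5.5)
The plan is to identify $\mathcal{A}_{g,G}(k)$ over $A$ with a set of torsors and then use the standard dictionary between torsors and Galois cocycles. Fix $A \in \mathcal{A}_g(k)$. The fibre of $\mathcal{A}_{g,n} \to \mathcal{A}_g$ over $A$ is the scheme $P_A := \mathrm{Isom}(A[n], (\Z/n\Z)^{2g})$ of isomorphisms respecting the Weil pairing up to scalar. It is a right $\GSp_{2g}(\Z/n\Z)$-torsor over $k$. The fibre of $\mathcal{A}_{g,G} = [\mathcal{A}_{g,n}/G] \to \mathcal{A}_g$ over $A$ is then the quotient $P_A/G$. Hence $A$ lifts to $\mathcal{A}_{g,G}(k)$ if and only if $(P_A/G)(k) \neq \emptyset$.

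Since $P_A/G$ is a finite étale $k$-scheme (the map $\mathcal{A}_{g,G}\to\mathcal{A}_g$ is representable), this uses only that the fibre of the quotient stack is the quotient of the fibre. That holds because $\mathcal{A}_{g,n}\to\mathcal{A}_g$ is a torsor, so base change commutes with the $G$-quotient.

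Next, make the Galois action explicit. Choose a $\bar k$-point of $P_A$, i.e.\ a symplectic basis $\phi: A[n](\bar k) \cong (\Z/n\Z)^{2g}$. Then $P_A(\bar k) = \phi\cdot \GSp_{2g}(\Z/n\Z)$, so $P_A(\bar k)$ is identified with $\GSp_{2g}(\Z/n\Z)$ via $\phi \cdot h \mapsto h$. Under this identification, $\sigma\in\Gamma_k$ acts by $\sigma(\phi)=\phi\circ\sigma^{-1} = \rho_{A,n}(\sigma)^{-1}\phi$. Consequently $\Gamma_k$ acts on $(P_A/G)(\bar k) \cong \GSp_{2g}(\Z/n\Z)/G$ through $\rho_{A,n}$ by left multiplication, up to the harmless inverse and the convention of whether $\rho$ acts on the left or right. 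The main step to check carefully is exactly this compatibility of conventions, in particular that the action really is left multiplication by $\rho_{A,n}(\sigma)$ on cosets and not by some twisted version. I would verify it directly from the definition $\rho_{A,n}(\sigma) = \phi\sigma\phi^{-1}$.

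Finally, a $k$-point of $P_A/G$ is a coset $hG$ fixed by all of $\rho_{A,n}(\Gamma_k)$. The condition $\rho_{A,n}(\sigma)hG = hG$ for all $\sigma$ is equivalent to $h^{-1}\rho_{A,n}(\Gamma_k)h \subseteq G$, that is, the image of $\rho_{A,n}$ lies in the conjugate $hGh^{-1}$. This gives both directions of the lemma. Changing the basis $\phi$ conjugates $\rho_{A,n}$, which matches the fact that the condition is only stated up to conjugation.
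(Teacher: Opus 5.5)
Your argument is correct. It is the fibrewise version of the paper's proof rather than a literal copy of it.

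The paper writes $\mathcal{A}_{g,G}$ as the $2$-fibre product $\mathcal{A}_g \times_{B\GSp_{2g}(\Z/n\Z)} BG$ and uses $BG(k)=\H^1(k,G)=\Hom(\Gamma_k,G)/\sim$. It then asserts that the classifying map sends $A$ to the class of $\rho_{A,n}$. So $A$ lifts iff $[\rho_{A,n}]$ lies in the image of $\H^1(k,G)\to\H^1(k,\GSp_{2g}(\Z/n\Z))$, i.e.\ iff $\rho_{A,n}$ is conjugate into $G$.

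You instead base change along $A\colon \Spec k\to\mathcal{A}_g$. This replaces $BG\to B\GSp_{2g}(\Z/n\Z)$ by its fibre $P_A/G$ over the point $[P_A]$, and you compute $(P_A/G)(k)$ as the $\Gamma_k$-fixed cosets in $\GSp_{2g}(\Z/n\Z)/G$. The two routes are linked by the standard fact that a torsor $P$ admits a reduction of structure group to $G$ iff $(P/G)(k)\neq\emptyset$.

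A small correction to your justification: base change commutes with $[-/G]$ in general. What the torsor property buys is that $G$ acts freely on $P_A$, so $[P_A/G]$ really is the finite \'etale scheme $P_A/G$.

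What your route buys is that it avoids non-abelian cohomology and the isomorphism data in the $2$-fibre product. It also proves the step the paper only asserts, namely that $P_A$ corresponds to $\rho_{A,n}$. With the right action $\phi\cdot h = h^{-1}\circ\phi$, your formula $\sigma(\phi)=\rho_{A,n}(\sigma)^{-1}\circ\phi$ becomes $\sigma(\phi\cdot h)=\phi\cdot(\rho_{A,n}(\sigma)h)$. So the action on $\GSp_{2g}(\Z/n\Z)/G$ is untwisted left multiplication by $\rho_{A,n}(\sigma)$, and the convention check you flag goes through.

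The paper's version is shorter and more formal. It makes clear that the lemma is really a statement about the map $\H^1(k,G)\to\H^1(k,\GSp_{2g}(\Z/n\Z))$ for a group with trivial Galois action.
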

\begin{proof}
	Taking successive quotients yields the commutative diagram of Cartesian squares
	\[
\xymatrix{
\mathcal{A}_{g,n} \ar[r] \ar[d] & \Spec k \ar[d]  \\
\mathcal{A}_{g,G} \ar[r] \ar[d] & BG \ar[d]  \\
\mathcal{A}_{g} \ar[r] &  B\GSp_{2g}(\Z/n\Z) }
\]
where $BG = [\Spec k /G]$ denotes the classifying stack of $G$. The bottom map is exactly the classifying map determined by the $\GSp_{2g}(\Z/n\Z)$-torsor $\mathcal{A}_{g,n} \to \mathcal{A}_{g}$. However $BG(k) = \H^1(k,G) = \Hom(\Gamma_k,G)/\hspace{-4pt}\sim$ where the equivalence is up to conjugacy in $G$ (this latter equality follow from the explicit description of non-abelian Galois cohomology in terms of cocycles and coboundaries, using the fact that $\Gamma_k$ acts trivially on $G$; see \cite[\S5.1]{Ser02}). The bottom arrow is thus exactly the map which associates to $A \in \mathcal{A}_g(k)$ the induced $\GSp_{2g}(\Z/n\Z)$-torsor, which is exactly equivalent to the data of the Galois representation $\rho_{A,n}$. By the universal property of the fibre product, we find that $A$ lies in the image of $\mathcal{A}_{g,G}(k) \to \mathcal{A}_{g}(k)$ if and only if the Galois representation $\rho_{A,n}$ lies in the image of $BG(k) \to B\GSp_{2g}(\Z/n\Z)$, i.e.~if and only if it lies in the image of $\Hom(\Gamma_k,G)/\hspace{-3pt}\sim \to \Hom(\Gamma_k,\GSp_{2g}(\Z/n\Z))/\hspace{-3pt}\sim$. This is exactly equivalent to the image of $\rho_{A,n}$ lying in $G$ up to conjugacy.
\end{proof}

\subsection{Moduli stacks of hyperelliptic curves}
For $i \in \{1,2\}$, let $\mathcal{H}_{2g + i}$ denote the moduli stack of odd/even hyperelliptic curves respectively. We have $\mathcal{H}_{2g+1} \subseteq \P(4,6,\cdots, 4g + 2)$ as a dense open subset by \cite[Prop.~5.9]{HP23} or \cite[Prop.~4.2(1)]{Fed14}. The Jacobian yields a morphism of stacks $\mathcal{H}_{2g+1} \to \mathcal{A}_{g}$; pulling back gives moduli stacks $\mathcal{H}_{2g+1,G} \to \mathcal{H}_{2g+1}$ of hyperelliptic curves with level-$G$ structure.

\begin{lemma} \label{lem:irreducible}
	Let $n \in \N$ be odd. 
	For any subgroup $G \subseteq \GSp_{2g}(\Z/n\Z)$, the map $\mathcal{H}_{2g +1,G} \to \mathcal{H}_{2g +1}$ 
	is finite \'etale. The stack $\mathcal{H}_{2g +1,G}$ is geometrically connected provided $\nu(G) = (\Z/n\Z)^\times$.
\end{lemma}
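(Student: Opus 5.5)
The first assertion is formal. By construction $\mathcal{H}_{2g+1,G} = \mathcal{H}_{2g+1}\times_{\mathcal{A}_g}\mathcal{A}_{g,G}$, and $\mathcal{A}_{g,G}\to\mathcal{A}_g$ is finite \'etale of degree $|\GSp_{2g}(\Z/n\Z)|/|G|$. Indeed it is the quotient by $G$ of the $\GSp_{2g}(\Z/n\Z)$-torsor $\mathcal{A}_{g,n}\to\mathcal{A}_g$, or equivalently the base change of $BG\to B\GSp_{2g}(\Z/n\Z)$ as in the diagram in the proof of Lemma~\ref{lem:level_structure_quotient_stack}. Finite \'etale morphisms are stable under base change, so the first claim follows. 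The oddness of $n$ plays no role here.

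For geometric connectedness, work over $\bar{\Q}$ and use Galois theory of finite \'etale covers. The cover $\mathcal{H}_{2g+1,n}:=\mathcal{H}_{2g+1}\times_{\mathcal{A}_g}\mathcal{A}_{g,n}$ is a $\GSp_{2g}(\Z/n\Z)$-torsor over $\mathcal{H}_{2g+1}$. Over $\bar\Q$ it is classified by a monodromy homomorphism
\[
\pi_1^{\mathrm{et}}(\mathcal{H}_{2g+1,\bar\Q}) \to \GSp_{2g}(\Z/n\Z).
\]
Since the multiplier is fixed geometrically by the cyclotomic character, its image $M$ lies in $\Sp_{2g}(\Z/n\Z)$, up to the choice of component. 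The connected components of $[\mathcal{H}_{2g+1,n}/G]_{\bar\Q}$ are in bijection with the double cosets $M\backslash \GSp_{2g}(\Z/n\Z)/G$. If $M=\Sp_{2g}(\Z/n\Z)$, then $\Sp_{2g}$ is normal with quotient $(\Z/n\Z)^\times$ via $\nu$. The double cosets then correspond to $(\Z/n\Z)^\times/\nu(G)$, which is a single point when $\nu(G)=(\Z/n\Z)^\times$. This is exactly the mechanism in Lemma~\ref{lem:A_g_connected}.

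The main obstacle is therefore the \textbf{big monodromy} statement: the geometric monodromy of the $n$-torsion of the universal Jacobian over $\mathcal{H}_{2g+1,\bar\Q}$ is all of $\Sp_{2g}(\Z/n\Z)$ for odd $n$. I would not prove this from scratch, but reduce it to the classical computation. Over $\C$, the family $y^2=f(x)$ with $f$ monic of degree $2g+1$ and squarefree is pulled back from the configuration space of $2g+1$ distinct points in $\A^1$. Its topological monodromy on $\H_1$ is the image of the braid group $B_{2g+1}$, which A'Campo showed surjects onto $\Sp_{2g}(\Z)$ modulo the level-$2$ congruence subgroup. In particular the image has Zariski-dense, hence by strong approximation full, reduction modulo every odd $n$. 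Equivalently, one can cite the results of \cite{LSTY20} or Yu's theorem on mod-$\ell$ monodromy of hyperelliptic families. This is where oddness of $n$ is needed, since mod $2$ the monodromy is the symmetric group $S_{2g+1}$ acting on Weierstrass points.

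One must also check that passing from the affine family to the stack $\mathcal{H}_{2g+1}$, which is the quotient by the $\Gm$-weighted action, does not shrink the monodromy. Under the quotient the image of $\pi_1$ can only grow, because $\pi_1$ of the affine parameter space maps to $\pi_1$ of the stack. Hence surjectivity onto $\Sp_{2g}(\Z/n\Z)$ persists. By the comparison theorem, \'etale monodromy over $\bar\Q$ equals the profinite completion of the topological monodromy, and this completes the argument.
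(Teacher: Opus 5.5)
Your proposal is correct and follows essentially the same route as the paper: finite \'etaleness by base change from $\mathcal{A}_{g,G}\to\mathcal{A}_g$, and geometric connectedness from A'Campo's big-monodromy theorem combined with the multiplier argument of Lemma~\ref{lem:A_g_connected}. Your double-coset count $M\backslash\GSp_{2g}(\Z/n\Z)/G$ is a repackaging of that argument, and you apply A'Campo directly to $B_{2g+1}$ rather than to the even family followed by marking a Weierstrass point, which is only a cosmetic difference.

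One small correction. What A'Campo gives is that the monodromy group \emph{contains} the level-$2$ congruence subgroup (its mod-$2$ image is only $S_{2g+1}$). Full reduction modulo every odd $n$ follows from this containment, via surjectivity of $\Sp_{2g}(\Z)\to\Sp_{2g}(\Z/2n\Z)$ and the Chinese remainder theorem. It does not follow from Zariski density alone: for example, the level-$3$ congruence subgroup is Zariski-dense but reduces trivially modulo $3$.
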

\begin{proof}
	The first part follows as $\mathcal{A}_{g,G} \to \mathcal{A}_g$ is finite \'etale.
	For geometric connectivity, the key fact is that the geometric monodromy action on 
	$\mathcal{H}_{2g +2,n}$, which comes from viewing it as a stack over $\C$, 
	is still the full $\Sp_{2g}(\Z/n\Z)$
	provided $n$ is odd by \cite[Thm.~1(2)]{ACamp}. Moreover, fixing a Weierstrass
	point does not affect level-$n$ structure for $n$ odd, hence $\mathcal{H}_{2g+1,n}$
	also has the full monodromy and is hence geometrically connected when viewed over $\Q(\mu_n)$.
	The result now follows from a similar argument to the proof of Lemma \ref{lem:A_g_connected}.
\end{proof}

\subsection{Proof of Theorem \ref{thm:level_structures}}
Let $G \subseteq \GSp_{2g}(\Z/n\Z)$ be a proper subgroup with $\nu(G) = (\Z/n\Z)^\times$. Then the map $\mathcal{H}_{2g +1,G} \to \mathcal{H}_{2g +1}$ is finite \'etale of degree greater than $1$, and $\mathcal{H}_{2g +1,G}$ is connected by Lemma \ref{lem:irreducible}. Hence it is a thin map in the sense of Definition \ref{def:thin}. Therefore we may apply Theorem \ref{thm:stackyserre} and Lemma~\ref{lem:level_structure_quotient_stack} to obtain the result.
\qed

\end{document}